\newtheorem{theorem}{Theorem}[section]
\newtheorem{lemma}[theorem]{Lemma}
\newtheorem{proposition}[theorem]{Proposition}
\newtheorem{corollary}[theorem]{Corollary}
\begin{document}
\title{Congruence subgroups of braid groups}
\author{Charalampos Stylianakis}
\date{ }
\maketitle

\begin{abstract}
	In this paper we give a description of the generators of the prime level congruence subgroups of braid groups. Also, we give a new presentation of the symplectic group over a finite field, and we calculate symmetric quotients of the prime level congruence subgroups of braid groups. Finally, we find a finite generating set for the level-3 congruence subgroup of the braid group on 3 strands.
\end{abstract}

\section{Introduction}

Let $B_n$ be the braid group on $n$ strands. By evaluating the (unreduced) Burau representation $B_n \to \mathrm{GL}_{n-1}(\mathbb{Z}[t^{\pm 1}])$ at $t=-1$ we obtain a symplectic representation
\[
\rho : B_{n} \rightarrow
\begin{cases} 
\hfill \mathrm{Sp}_{n-1}(\mathbb{Z})    \hfill & \text{ if $n$ is odd,} \\
\hfill (\mathrm{Sp}_{n}(\mathbb{Z}))_{u} \hfill & \text{ if $n$ is even,}
\end{cases}
\]
where $(\mathrm{Sp}_{n}(\mathbb{Z}))_{u}$ is the subgroup of $\mathrm{Sp}_{n}(\mathbb{Z})$ fixing one vector $u \in \mathbb{Z}^{n}$ \cite[Proposition 2.1]{GJ1} (see also \cite{BM} and \cite{C1}).

For a positive integer $m$, the projection $\mathbb{Z} \to \mathbb{Z}/m$ induces a representation as follows:
\[\rho_m : B_{n} \rightarrow
\begin{cases} 
\hfill \mathrm{Sp}_{n-1}(\mathbb{Z}/m)    \hfill & \text{ if $n$ is odd,} \\
\hfill (\mathrm{Sp}_{n}(\mathbb{Z}/m))_{u} \hfill & \text{ if $n$ is even.}
\end{cases}
\]
Note that if $m=1$, then $\rho_1=\rho$. For $i>1$ the kernel of $\rho_m$ is denoted by $B_n[m]$ and it is called the \emph{level}-\emph{m congruence subgroup of} $B_n$. The kernel of $\rho$ is called the \emph{braid Torelli} group, and it is denoted by $\mathcal{BI}_n$. The group $\mathcal{BI}_n$ has been extensively studied by Hain \cite{RH}, Brendle-Margalit \cite{BM1,BM2}, and Brendle-Margalit-Putman \cite{BMP}.

For $p$ prime, A'Campo proved that the homomorphism $\rho_p$ is surjective, by explicitly calculating the image of $\rho_p$ \cite[Theorem 1 (1)]{C1}. Wanjryb gave a presentation of $\mathrm{Sp}_{n-1}(\mathbb{Z}/p)$ and $(\mathrm{Sp}_{n}(\mathbb{Z}/p))_{u}$ as quotients of $B_n$ \cite[Theorem 1]{W}. Let $PB_n$ be the \emph{pure braid group}, that is, the kernel of the epimorphism $B_n \to S_n$, where $S_n$ is the symmetric group on $n$ letters. Our first result is an analogue of Wanjryb's theorem.

\paragraph{Theorem A} \textit{For $p$ prime, the groups $\mathrm{Sp}_{n-1}(\mathbb{Z}/p)$ and $(\mathrm{Sp}_{n}(\mathbb{Z}/p))_{u}$ admit a presentation as quotients of the pure braid group $PB_n$.}\\

\begin{flushleft}
This result is given as Theorem \ref{SYPRE} in the paper. 
\end{flushleft}

A result of Arnol'd shows that $B_n[2]=PB_n$, where $PB_n$ is the pure braid group  \cite{A1}. Therefore, for every $k$ even, we have that $B_n[k] \unlhd PB_n$. Our second result extends A'Campo's theorem.

\paragraph{Theorem B} \textit{For $m=2p_1...p_k$, where $p_i\geq3$ are primes, we have that $PB_n/ B_n[m]$ is isomorphic to $\bigoplus\limits_{i=1}^k \mathrm{Sp}_{n-1}(\mathbb{Z}/p_i)$ if $n$ is odd, and $\bigoplus\limits_{i=1}^k (\mathrm{Sp}_{n}(\mathbb{Z}/p_i))_{u}$ if $n$ is even.}\\

\begin{flushleft}
Theorem B is Theorem \ref{PBQ1} (see also Theorem \ref{PBQ2}) in the paper.
\end{flushleft} 

We also characterize quotient groups of congruence subgroups of braid groups. The braid group $B_n$ surjects onto the symmetric group $S_n$. The kernel of this map is well known to be the pure braid group $PB_n$. Also, by a result established by A'rnold \cite{A1} the group $PB_n$ is isomorphic to $B_n[2]$. See also \cite[Section 2]{BM} for further discussion. Therefore, we have $B_n / B_n[2] \cong S_n$. We generalize this result as stated in the following theorem.

\paragraph{Theorem C.} For $p$ prime number, the group $B_n[p] / B_n[2p]$ is isomorphic to $S_n$.

\begin{flushleft}
Theorem C is Theorem \ref{symquo} in the paper.
\end{flushleft}

\paragraph{Topological description of congruence subgroups.} A key part of the paper is a topological interpretation of $B_n[p]$, for $p\geq3$ prime, given in Section 4. The content of Section 4 was inspired by Powell, who based on Birman's work on the presentation of the symplectic group \cite[Theorem 1]{JB1}, to show that the Torelli subgroup of the mapping class group is normally generated by bounding pair maps, and Dehn twists about separating simple closed curves \cite[Theorem 2]{JP}. 

Theorems A and B are used to find normal generators for $B_n[m]$, where $m=2p_1...p_k$ and $p_i$ is an odd prime. Motivated by Section 4 it would be interesting to find a topological description of the generators of $B_n[m]$ in the future.

\paragraph{Related results.} The mapping class group $\mathrm{Mod}(\Sigma)$ of an orientable surface $\Sigma$ is the group of isotopy classes of homeomorphisms that preserve the orientation of $\Sigma$, fix the boundary pointwise, and preserve the set of marked points setwise. We denote by $T_c$ a Dehn twist about a simple closed curve $c$. Let $\Sigma^b_g$ be a surface of genus $g\geq1$ with $b$ boundary components, where $b\in \{1,2\}$. It is a special case of theorem of Birman-Hilden \cite{BH} that $B_{2g+b}$ embeds into $\mathrm{Mod}(\Sigma^b_g)$ \cite[Section 9.4]{BFM}. We denote the image of this embedding by $\mathrm{SMod}(\Sigma^b_g)$. As mentioned in the previous page, the braid Torelli $\mathcal{BI}_{2g+b}$ is the kernel of the symplectic representation of $B_{2g+b}$. Hain conjectured that $\mathcal{BI}_{2g+b}$ is isomorphic to the group generated by Dehn twists about separating simple closed curves inside $\mathrm{SMod}(\Sigma^b_g)$ \cite{RH}. This conjecture was proved by Brendle-Margalit-Putman \cite[Theorem A]{BMP}, and also studied by Brendle-Margalit \cite{BM1,BM2}. By the definitions given in the beginning of the paper, the group $\mathcal{BI}_{2g+b}$ is a subgroup of $B_{2g+b}[m]$, for any $m \in \mathbb{N}$.

For $m\geq2$, consider $B_{2g+b}[m]$ as a subgroup of $ \mathrm{SMod}(\Sigma^b_g) \cong B_{2g+b}$. A consequence of a work of Arnol'd shows that $B_{2g+b}[2]$ is isomorphic to the pure braid group $PB_{2g+b}$ \cite{A1} (see \cite[Section 2]{BM} for explanation of this isomorphism). Combining the latter result with the work of Humphries \cite[Theorem 1]{SH} we obtain that $B_{2g+b}[2]$ is isomorphic to the normal closure of a square of a Dehn twist about nonseparating simple closed curve in $\mathrm{SMod}(\Sigma^b_g)$. Brendle-Margalit extended the latter result by proving that the normal closure of the $4^{th}$ power of a Dehn twist about a nonseparating simple closed curve in $\mathrm{SMod}(\Sigma^b_g)$ is isomorphic to $B_{2g+b}[4]$ \cite[Main Theorem]{BM}. 



Let $\mathcal{T}_{2g+b}(m)$ be the normal closure of the $m^{th}$ power of a Dehn twist in $\mathrm{SMod}(\Sigma^b_g)$, where $g\geq1$ and $b=1,2$. Coxeter proved that $\mathcal{T}_{2g+b}(m)$ is a finite index subgroup of $\mathrm{SMod}(\Sigma^b_g)=B_{2g+b}$ if and only if $(2g+b-2)(m-2)<4$ \cite[Section 10]{C2}. As mentioned above, $\mathcal{T}_{2g+b}(2) = B_{2g+b}[2]$. Furthermore, Humphries gave a complete description of when a group generated by $\{\mathcal{T}_{2g+b}(m_i) \mid m_i \in \mathbb{N}\}$, for finite number of $m_i$, is of finite index in $PB_{2g+b}$ \cite[Theorem 1]{HU}. In addition, Funar-Kohno proved that the intersection of all $\mathcal{T}_{2g+b}(2m)$, where $m\in \mathbb{N}$, is trivial \cite[Theorem 1.1]{FK1}.

Finally, we note a more general definition of congruence subgroups of braid groups. Let $F_n$ be the free group of rank $n$. There is an inclusion $B_n \to \mathrm{Aut}(F_n)$ \cite[Theorem 1.9]{JB}. Consider a characteristic subgroup $H$ of finite index in $F_n$. The kernel of $\mathrm{Aut}(F_n) \to \mathrm{Aut}(F_n / H)$ is called \emph{principal congruence subgroup}, and any finite index subgroup of $\mathrm{Aut}(F_n)$ containing a principal congruence subgroup is called \emph{congruence subgroup}. A group $G$ is said to have the \emph{congruence subgroup propery} if every finite index subgroup of $G$ contains a principal congruence subgroup. Asada proved that $B_n$ satisfies the congruence subgroup property by using the notions of field extensions and profinite groups \cite[Theorem 3A, Theorem 5]{AS}. In contrast with Asada's techniques, Thurston gave a more elementary proof to the congruence subgroup property of $B_n$ \cite{BMC}.

\paragraph{Outline of the paper.} In Section 2 we give basic background on braid groups, hyperelliptic mapping class groups, the symplectic representation of braid groups, and the congruence subgroups of braid groups. In Section 3 we recall some key results about the congruence subgroups of symplectic groups. In Section 4 we give a topological interpretation of the generators of the prime level congruence subgroups of braid groups. In Section 5 we prove Theorems A and B. In Section 6 we prove Theorem C.

\paragraph{Acknowledgments.} I would like to thank my PhD supervisor Tara Brendle for her support during my work on this paper.

\section{Preliminaries}
In this section we recall the definition of braid groups, hyperelliptic  mapping class groups, and the symplectic representation of braid groups.

\subsection{Definitions of braid groups}

\begin{figure}[h]
\begin{center}
\includegraphics[scale=.2]{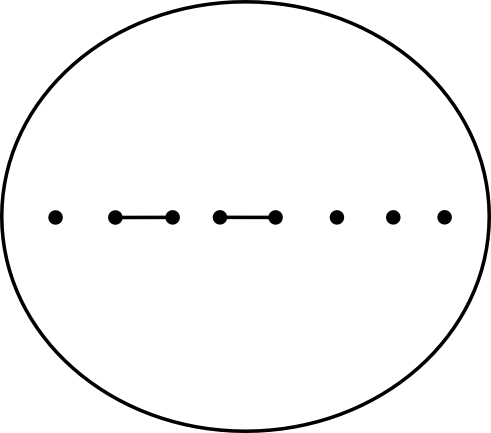}
\includegraphics[scale=.2]{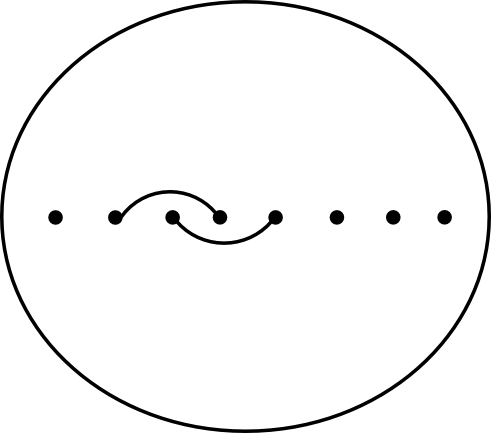}
\end{center}
\caption{The action of $\sigma_3$ on a punctured disc.}
\label{braidgroup}
\begin{picture}(22,12)

\end{picture}
\end{figure}

\paragraph{Braid groups.} For detailed description of the following definition, see Birman-Brendle's survey \cite{BB}. Let $\Sigma^b_{g,n}$ denote an orientable surface of genus $g$ with $n$ punctures and $b$ boundary components. If $n=0$ we will simply write $\Sigma^b_{g}$. If $g=0$ and $b=1$ then $\Sigma^1_{0,n}$ is homeomorphic to a punctured disc. We enumerate the punctures from left to right. The \emph{braid group} $B_n$ on $n$ strands is defined to be the mapping class group $\mathrm{Mod}(\Sigma^1_{0,n})$ of $\Sigma^1_{0,n}$. For $1\leq i \leq n-1$ we denote by $\sigma_i$ the mapping classes that interchanges the punctures $i,i+1$ as depicted in Figure \ref{braidgroup} for $i=3$. The mapping classes $\sigma_i$ are called half-twists. It turns out that $\sigma_i$ generate the braid group $B_n$. In fact we have the following presentation
\[ \left< \sigma_1,...,\sigma_{n-1} \: | \: \sigma_i \sigma_{i+1} \sigma_i=\sigma_{i+1} \sigma_i \sigma_{i+1}, \sigma_i \sigma_j = \sigma_j \sigma_i \: \mathrm{when} \: |i-j|>1  \right>. \]

Consider the symmetric group $S_n$, and for $1 \geq i \geq n-1$ let $s_i$ denote the generators of $S_n$, that is the transpositions $(i,i+1)$. The map $B_n \to S_n$ defined by $\sigma_i \mapsto s_i$ is a well defined homomorphism with kernel the \emph{pure braid group} $PB_n$. Let $1\leq i<j\leq n-1$, we denote by $a_{i,j}$ the element $\sigma_{j-1}... \sigma^2_i ... \sigma_{j-1}$. For $1\leq i<j\leq n-1$ the group $PB_n$ admits a presentation with generators $a_{i,j}$ and relations
\begin{enumerate}
\item[P1.] $\begin{aligned}[t]
a^{-1}_{r,s} a_{i,j} a_{r,s}  = a_{i,j}, \: 1 \leq r<s<i<j \leq n \: \mathrm{or} \: 1 \leq i<r<s<j \leq n,
\end{aligned}$

\item[P2.] $\begin{aligned}[t]
a^{-1}_{r,s} a_{i,j} a_{r,s}  = a_{r,j} a_{i,j} a^{-1}_{r,j} , \: 1 \leq r<s=i<j \leq n,
\end{aligned}$

\item[P3.] $\begin{aligned}[t]
a^{-1}_{r,s} a_{i,j} a_{r,s}  = (a_{i,j} a_{s,j})a_{i,j}(a_{i,j} a_{s,j})^{-1} , \: 1 \leq r=i<s<j \leq n,
\end{aligned}$

\item[P4.] $\begin{aligned}[t]
a^{-1}_{r,s} a_{i,j} a_{r,s}  = (a_{r,j} a_{s,j} a^{-1}_{r,j} a^{-1}_{s,j})a_{i,j}(a_{r,j} a_{s,j} a^{-1}_{r,j} a^{-1}_{s,j})^{-1} , \: 1 \leq r<i<s<j \leq n.
\end{aligned}$
\end{enumerate}
For more details about definitions and presentations of $B_n$ and $PB_n$ see \cite[Chapter 1]{BB}.

\begin{figure}[h]
\begin{center}
\includegraphics[scale=.5]{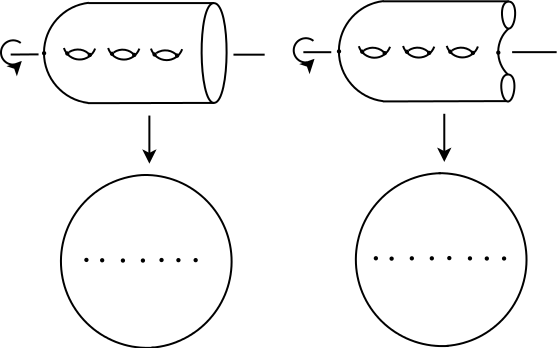}
\end{center}
\caption{Action of the hyperelliptic involution.}
\label{twofold}
\begin{picture}(22,12)

\end{picture}
\end{figure}

\paragraph{Hyperelliptic mapping class groups.} Let $c$ be a nonseparating simple closed curve on a surface $\Sigma^b_{g,n}$. We denote by $T_c$ the Dehn twist about the curve $c$. Dehn twists about nonseparating simple closed curves generate $\mathrm{Mod}(\Sigma^b_g)$. Consider a hyperelliptic involution $\iota$ as depicted in Figure \ref{twofold}. For $b=1,2$, $\iota$ acts on $\Sigma^b_g$.  Since $\iota$ does not fix the boundary components of $\Sigma^b_g$ pointwise, then $\iota \notin \mathrm{Mod}(\Sigma^b_g)$. We have a two fold branched cover $\Sigma^b_g \rightarrow \Sigma^b_g / \iota.$ Topologically $\Sigma^b_g / \iota$ is homeomorphic to $\Sigma^1_{0,2g+b}$ (see Figure \ref{twofold}). We note that if $q_1,q_2$ denote the boundary components of $\Sigma^2_g$, then $\iota(q_1)=q_2$.

\begin{figure}[h]
\begin{center}
\includegraphics[scale=.45]{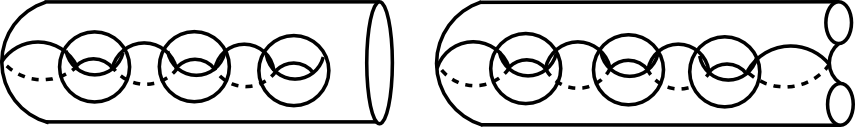}
\end{center}
\caption{Generators of the hyperelliptic mapping class group.}
\label{hyperm}
\begin{picture}(22,12)
\put(65,82){$c_1$}
\put(83,85){$c_2$}
\put(102,82){$c_3$}
\put(119,85){$c_4$}
\put(139,82){$c_5$}
\put(156,85){$c_6$}
\put(223,82){$c_1$}
\put(242,85){$c_2$}
\put(260,82){$c_3$}
\put(277,85){$c_4$}
\put(295,81){$c_5$}
\put(313,85){$c_6$}
\put(333,83){$c_7$}
\end{picture}
\end{figure}
Consider the curves $c_i$ depicted in Figure \ref{hyperm}, and let $\sigma_i$ be the generators of $B_{2g+b}$. We define a map $\xi:B_{2g+b}\rightarrow \mathrm{Mod}(\Sigma^b_g)$ by $\xi(\sigma_i) = T_{c_i}$. Since the braid, and the disjointness relations are satisfied by $\sigma_i$ and $T_{c_i}$, then $\xi$ is a homomorphism. The image of $\xi$ is called \emph{hyperelliptic mapping class group}, and it is denoted by $\mathrm{SMod}(\Sigma^b_g)$. In fact we have $B_{2g+b}\cong \mathrm{SMod}(\Sigma^b_g)$ \cite[Theorem 9.2]{BFM} (see also \cite{PV}).

\subsection{Symplectic representation}

In this section we will construct a representation for the braid group $B_n$. Firstly, we recall the definition of $\mathrm{Sp}_{2n}(\mathbb{Z})$. Let $J$ be the $2n \times 2n$ matrix
\[\left( \begin{array}{ccc}
0 & I_n\\
-I_n & 0\\
\end{array} \right).\]
The symplectic group with integer coefficients is defined to be 
\[\mathrm{Sp}_{2n}(\mathbb{Z}) = \{ A \in \mathrm{GL}(2n,\mathbb{Z}) \, \mid \, A^T J A = J \}.\]
We also define the symplectic group with coefficients in $\mathbb{Z}/m$ to be
\[\mathrm{Sp}_{2n}(\mathbb{Z}/m) = \{ A \in \mathrm{GL}(2n,\mathbb{Z}) \, \mid \, A^T J A \equiv J \: \mathrm{mod}(m) \}\]
where $m \in \mathbb{N}$. For a fixed $u \in \mathbb{Z}^{2n}$, we also recall
\[(\mathrm{Sp}_{2n}(\mathbb{Z}))_{u} = \{ t \in \mathrm{Sp}_{2n}(\mathbb{Z}) \mid t(u)=u \}.\]

Consider $g\geq1$ and $b=1,2$. Since $B_{2g+b}\cong \mathrm{SMod}(\Sigma^b_g)$, we will use the action of $\mathrm{SMod}(\Sigma^b_g)$ on the first homology of $\Sigma^b_g$ to construct a representation for $B_{2g+b}$. 

\begin{figure}[h]
	\begin{center}
		\includegraphics[scale=.45]{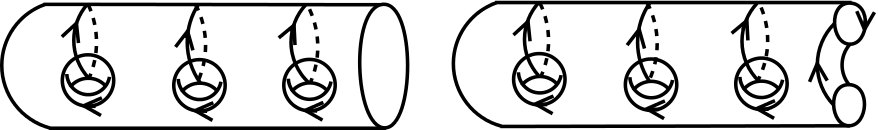}
	\end{center}
	\caption{Standard generators for $\mathrm{H}_1(\Sigma^1_g)$, and $\mathrm{H}^{P}_1(\Sigma^2_{g},\mathbb{Z})$.}
	\begin{picture}(22,12)
	\put(63,85){$y_1$}
	\put(63,67){$x_1$}
	\put(106,85){$y_2$}
	\put(106,67){$x_2$}
	\put(143,85){$y_3$}
	\put(143,67){$x_3$}
	\put(228,85){$y_1$}
	\put(227,65){$x_1$}
	\put(268,85){$y_2$}
	\put(266,67){$x_2$}
	\put(305,85){$y_3$}
	\put(305,67){$x_3$}
	\put(371,88){$y_4$}
	\put(338,78){$x_4$}
	\end{picture}
	\label{sympb1}
\end{figure}

\paragraph{Construction of the representation.} We denote by $\iota_a$ the algebraic intersection number between curves of $\Sigma^b_g$ for $g\geq1$ and $b=1,2$. The form $\iota_a$ is an alternating bilinear and nondegenerate. Every element of the mapping class group preserves $\iota_a$ \cite[Section 6.3]{BFM}. Consider $b=1$; the oriented curves $x_i,y_i$ of $\Sigma^1_g$ of Figure \ref{sympb1} form a symplectic basis for $\mathrm{H}_1(\Sigma^1_g;\mathbb{Z})$. The action of $\mathrm{SMod}(\Sigma^1_g)$ on $\mathrm{H}_1(\Sigma^1_g;\mathbb{Z})$ induces the following representation:
\[\mathrm{SMod}(\Sigma^1_g) \to \mathrm{Sp}_{2g}(\mathbb{Z}).\]

If $b=2$, the module $\mathrm{H}_1(\Sigma^2_g;\mathbb{Z})$ is not symplectic. Thus, we will consider a different module. Fix a point on each of the boundaries of $\Sigma^2_g$, and denote by $Q$ the set that contains those two points. Denote also by $P$ the set that contains the two boundary components. We set $\mathrm{H}^P_1(\Sigma^2_g;\mathbb{Z}) \cong \mathrm{H}_1(\Sigma^2_g,Q;\mathbb{Z})/\langle P \rangle$. The module $\mathrm{H}^P_1(\Sigma^2_g;\mathbb{Z})$ is symplectic \cite[Section 2.1]{BM} (see also \cite{PM1}). The basis of $\mathrm{H}^P_1(\Sigma^2_g;\mathbb{Z})$ is $x_i,y_i$ as indicated on the right hand side of Figure \ref{sympb1}. The action of $\mathrm{SMod}(\Sigma^2_g)$ on $\mathrm{H}^P_1(\Sigma^2_g;\mathbb{Z})$ induces the following representation:
\[\mathrm{SMod}(\Sigma^2_g) \to (\mathrm{Sp}_{2g+2}(\mathbb{Z}))_{y_{g+1}},\]
where $(\mathrm{Sp}_{2g+2}(\mathbb{Z}))_{y_{g+1}}$ stands for the subgroup of $\mathrm{Sp}_{2g+2}(\mathbb{Z})$ that fixes the vector $y_{g+1}$.

Since the map $\xi : B_{2g+b} \to \mathrm{SMod}(\Sigma^b_g)$ is an isomorphism, we have a well defined representation
\[
\rho : B_{2g+b} \rightarrow
\begin{cases} 
\hfill \mathrm{Sp}_{2g}(\mathbb{Z})    \hfill & \text{ if $b=1$} \\
\hfill (\mathrm{Sp}_{2g+2}(\mathbb{Z}))_{y_{g+1}} \hfill & \text{ if $b=2$.}
\end{cases}
\]

\paragraph{Image of the representation.} We denote also by $[c]$ the homology class of a curve $c$ in $\Sigma^b_g$. For $x,c$ nonseparating simple closed curves in $\Sigma^b_g$, the automorphism $T_{[c]}([x]) = [x]+\iota_a(x,c)[c]$ is called a transvection \cite[Section 6.6.3]{BFM}. We remark that for every integer $m$, we have $T^m_{[c]}([x]) = [x]+m \iota_a(x,c)[c]$. 

Let $T_{c_i}$ be a Dehn twist about a curve $c_i$ indicated in Figure \ref{hyperm}. The image of $T_{c_i}$ under the symplectic representation is the transvection $T_{[c_i]}$. Also, since $\xi(\sigma_i)=T_{c_i}$ as explained in the previous section, we have $\rho(\sigma_i)=T_{[c_i]}$. We note also that $\rho(\sigma^m_i)=T^m_{[c_i]}$.

\paragraph{Kernel of the symplectic representation.} Assume that $b=1,2$, $g\geq0$, and recall that $B_{2g+b}=\mathrm{Mod}(D_{2g+b})$. The kernel of the symplectic representation $\rho$ is denoted by $\mathcal{BI}_{2g+b}$, and it is called the \emph{braid Torelli}. It is a result by Brendle-Margalit-Putman that $\mathcal{BI}_{2g+b}$ is generated by Dehn twists about simple closed curves surrounding 3 or 5 number of puncture points \cite[Theorem C]{BMP}.

Consider the isomorphism $\xi : B_{2g+b} \to \mathrm{SMod}(\Sigma^b_g)$. The image of $\mathcal{BI}_{2g+b}$ in $\mathrm{SMod}(\Sigma^b_g)$ under $\xi$ is denoted by $\mathcal{SI}(\Sigma^b_g)$. The latter group is well known as the hyperelliptic Torelli group. Furthermore, $\mathcal{SI}(\Sigma^b_g)$ is generated by Dehn twists about symmetric separating simple closed curves that bound a subsurface of genus 1 or 2 \cite[Theorem A]{BMP}.

\subsection{Congruence subgroups of braid groups}

Let $m$ be a positive integer. The surjective homomorphisms $\mathrm{H}_1(\Sigma^1_g;\mathbb{Z}) \to \mathrm{H}_1(\Sigma^1_g;\mathbb{Z}/m)$ and $\mathrm{H}^P_1(\Sigma^2_g;\mathbb{Z}) \to \mathrm{H}^P_1(\Sigma^2_g;\mathbb{Z}/m)$ induce the following epimorphisms:
\[
\begin{cases} 
\hfill \mathrm{Sp}_{2g}(\mathbb{Z}) \to & \mathrm{Sp}_{2g}(\mathbb{Z}/m) \\
\hfill (\mathrm{Sp}_{2g+2}(\mathbb{Z}))_{y_{g+1}} \to & (\mathrm{Sp}_{2g+2}(\mathbb{Z}/m))_{y_{g+1}}.
\end{cases}
\]
Thus we have a family of representations for the braid groups
\[
\rho_m : B_{2g+b} \rightarrow
\begin{cases} 
\hfill \mathrm{Sp}_{2g}(\mathbb{Z}/m)    \hfill & \text{ if $b=1$} \\
\hfill (\mathrm{Sp}_{2g+2}(\mathbb{Z}/m))_{y_{g+1}} \hfill & \text{ if $b=2$,}
\end{cases}
\]
where $g\geq1$. The kernels of the representations $\rho_m$ are denoted by $B_{2g+b}[m]$ and they are known as \emph{level-m congruence subgroups of braid groups}.

\section{Congruence subgroups of Symplectic groups}

In this section we examine the structure of the congruence subgroups of symplectic groups.

\paragraph{Congruence subgroups and generators.} The projection $\mathbb{Z}\to \mathbb{Z}/m$ induces a surjective homomorphism $\mathrm{Sp}_{2n}(\mathbb{Z}) \rightarrow \mathrm{Sp}_{2n}(\mathbb{Z}/m)$, whose kernel is the \textit{principal level $m$ congruence subgroup} of $\mathrm{Sp}_{2n}(\mathbb{Z})$ denoted by $\mathrm{Sp}_{2n}(\mathbb{Z})[m]$. The group $\mathrm{Sp}_{2n}(\mathbb{Z})[m]$ consists of all matrices of the form $I_{2n} + m A$; where $A \in \mathrm{Sp}_{2n}(\mathbb{Z})$. Furthermore, if $m$ is a multiple of $l$ then $\mathrm{Sp}_{2n}(\mathbb{Z})[m] \triangleleft \mathrm{Sp}_{2n}(\mathbb{Z})[l]$.\\

Next we give generators for $\mathrm{Sp}_{2n}(\mathbb{Z})[p]$ when $p$ is any prime number. Let $r \in \mathbb{Z}$. We define $e_{i,j}(r)$ to be the $n \times n$ matrix with $(i,j)^{th}$ entry equal to $r$ and 0 otherwise. Let $\beta_i(r)$ be the $n \times n$ matrix with $(i,i)^{th}$ and $(i,i+1)^{th}$ entries equal to $r$, $(i+1,i+1)^{th}$ and $(i+1,i)^{th}$ entries equal to $-r$ and 0 otherwise. Define also $s e_{i,j}(r)$ to be the $n \times n$ matrix with $(i,j)^{th}$ and $(j,i)^{th}$ entries equal to $r$ and 0 otherwise. For $1 \leq i \leq j \leq n$ we define:

\[ \mathcal{X}_{i,j}(r) = I_{2n} + \left( \begin{array}{ccc}
0 & 0\\
s e_{i,j}(r) & 0\\
\end{array} \right), \quad \mathcal{Y}_{i,j}(r) = I_{2n} + \left( \begin{array}{ccc}
0 & s e_{i,j}(r)\\
0 & 0\\
\end{array} \right). \]

\begin{flushleft}
	For $1 \leq i,j \leq n$ with $i \neq j$ we define:
\end{flushleft}

\[ \mathcal{Z}_{i,j}(r) = I_{2n} + \left( \begin{array}{ccc}
e_{i,j}(r) & 0\\
0 & -e_{i,j}(r)\\
\end{array} \right). \]

\begin{flushleft}
	For $1 \leq i < n$
\end{flushleft}

\[ \mathcal{W}_{i}(r) = I_{2n} + \left( \begin{array}{ccc}
\beta_i(r) & 0\\
0 & -\beta_i(r)\\
\end{array} \right). \]

\begin{flushleft}
	Finally, 
\end{flushleft}

\[ \mathcal{U}_{1}(r) = I_{2n} + \left( \begin{array}{ccc}
e_{1,1}(r) & e_{1,1}(r)\\
-e_{1,1}(r) & -e_{1,1}(r)\\
\end{array} \right). \]

The following theorem gives a nice description of $\mathrm{Sp}_{2n}(\mathbb{Z})[p]$ as a group generated by the matrices above \cite[Lemma 5.4]{CP}.

\begin{theorem}[Church-Putman]
	For $n \geq 2$ and for a prime number $p \geq 2$ the congruence subgroup $\mathrm{Sp}_{2n}(\mathbb{Z})[p]$ is generated by the set
	\[ \mathcal{S} =  \{ \mathcal{X}_{i,j}(p), \mathcal{Y}_{i,j}(p), \mathcal{Z}_{i,j}(p), \mathcal{W}_{i}(p), \mathcal{U}_{1}(p) \} \]
	where $i,j$ are indices defined as above.
	\label{CP}
\end{theorem}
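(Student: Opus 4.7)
The plan is to verify membership first, then to reduce an arbitrary element of $\mathrm{Sp}_{2n}(\mathbb{Z})[p]$ to the identity by an explicit column/row clearing procedure using the listed generators.

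For the membership check, I would directly verify that each matrix in $\mathcal{S}$ has the form $I_{2n} + pB$ and satisfies the symplectic condition $B^T J + JB + p B^T J B = 0$. In each case this reduces to a short block identity; for instance, for $\mathcal{X}_{i,j}(p)$ and $\mathcal{Y}_{i,j}(p)$ it reduces to the symmetry of $s e_{i,j}(r)$, for $\mathcal{Z}_{i,j}(p)$ it reduces to the relation between the upper-left and lower-right blocks, and for $\mathcal{W}_i(p)$ and $\mathcal{U}_1(p)$ it is a small direct computation using $\beta_i(r)^T = -\beta_i(r)$ and the fact that $e_{1,1}(p)$ is symmetric.

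For the generation step, take $M \in \mathrm{Sp}_{2n}(\mathbb{Z})[p]$ and argue by induction on $n$. The generators $\mathcal{X}_{i,j}(p)$, $\mathcal{Y}_{i,j}(p)$, $\mathcal{Z}_{i,j}(p)$ realise, on left multiplication, the $p$-scaled elementary symplectic row operations associated to long negative, long positive, and short roots respectively. Since $M \equiv I_{2n} \pmod{p}$, every off-diagonal entry in the first column of $M$ is divisible by $p$, so left multiplication by suitable products of these generators performs a Euclidean-algorithm-style reduction of the first column to $e_1$: at each stage one drops either the $p$-adic valuation or the absolute value of the largest off-diagonal entry. The symplectic relation then automatically forces the first row to be $e_1^T$ as well, and one descends to an element sitting in a complementary $\mathrm{Sp}_{2(n-1)}(\mathbb{Z})[p]$ block, closing the induction.

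The main obstacle is that this descent cannot be pushed all the way to $n=1$: the residual rank-one piece lives in $\mathrm{Sp}_2(\mathbb{Z})[p] \cong \Gamma(p) \subset \mathrm{SL}_2(\mathbb{Z})$, which for $p>2$ is \emph{not} generated by $p$-scaled elementary matrices alone. This is exactly where $\mathcal{W}_i(p)$ and $\mathcal{U}_1(p)$ become essential: $\mathcal{W}_i(p)$ couples two rank-one symplectic blocks and via conjugation moves $p$-scaled transvections between root indices, while $\mathcal{U}_1(p)$ supplies a genuinely new element inside a single rank-one block. The delicate part of the argument is to arrange the column-reduction so that the residual $\mathrm{Sp}_2(\mathbb{Z})[p]$ factor lands inside the subgroup generated by $\mathcal{U}_1(p)$, $\mathcal{X}_{1,1}(p)$, and $\mathcal{Y}_{1,1}(p)$, and then to verify by an explicit $\mathrm{SL}_2$-level computation — typically a congruence-level Gauss reduction — that these three elements suffice to generate the whole of $\Gamma(p)$.
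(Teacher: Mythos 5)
There is a genuine gap, and in fact the paper does not prove this statement at all: it is quoted verbatim from Church--Putman \cite[Lemma 5.4]{CP} and used as a black box, so the relevant comparison is between your sketch and the actual proof in that reference. The decisive flaw in your sketch is the last step. The residual rank-one piece $\mathrm{Sp}_2(\mathbb{Z})[p]=\Gamma(p)$ is, for $p\geq 3$, a free group of rank $2g+c-1$ where $g$ and $c$ are the genus and cusp number of the modular curve $Y(p)$; for $p=5$ this rank is already $11$, and it grows roughly like $p^3$. No three elements --- in particular not $\mathcal{U}_1(p)$, $\mathcal{X}_{1,1}(p)$, $\mathcal{Y}_{1,1}(p)$ --- can generate a free group of rank $11$, so the proposed ``explicit $\mathrm{SL}_2$-level computation'' cannot exist, and any strategy that reduces the problem to generating a genuine $\mathrm{Sp}_2(\mathbb{Z})[p]$ factor by the listed generators is doomed. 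This is exactly why the hypothesis $n\geq 2$ appears in the statement: the theorem is \emph{false} for $n=1$.

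The earlier reduction step is also not innocuous. With row operations that only add $p$-multiples of one row to another, clearing the first column is obstructed even in simple cases (e.g.\ a column beginning $1+p,\,p,\dots$ cannot be cleared using two rows alone), and more fundamentally a purely formal Gaussian-elimination argument would prove the analogous statement over arbitrary commutative rings, where it fails because the relative $\mathrm{K}_1$ (equivalently, the Mennicke symbol) need not vanish. The fact that the level-$p$ elementary matrices generate all of $\mathrm{Sp}_{2n}(\mathbb{Z})[p]$ for $n\geq 2$ is essentially the solution of the congruence subgroup problem for $\mathrm{Sp}_{2n}(\mathbb{Z})$ due to Mennicke and Bass--Milnor--Serre, and this number-theoretic input, specific to $\mathbb{Z}$ and to higher rank, is what Church--Putman's proof invokes and what your argument is missing. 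Your membership verification (that each element of $\mathcal{S}$ lies in $\mathrm{Sp}_{2n}(\mathbb{Z})[p]$) is fine, but that is the easy half.
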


We use Theorem \ref{CP} to prove the lemma below, since we do not know a concise proof in the literature. In particular, we use the generators of Theorem \ref{CP} to prove that $\mathrm{Sp}_{2n}(\mathbb{Z}/b)$ can be expressed as a quotient of some congruence subgroup of $\mathrm{Sp}_{2n}(\mathbb{Z})$ when $b$ is a prime number. 

\begin{lemma}
	Let $a$ and $b$ two distinct prime numbers. Then the following sequence is exact.
	\[ 1 \rightarrow \mathrm{Sp}_{2n}(\mathbb{Z})[ab] \rightarrow \mathrm{Sp}_{2n}(\mathbb{Z})[a] \rightarrow \mathrm{Sp}_{2n}(\mathbb{Z}/b) \rightarrow 1. \]
	\label{Ha}
\end{lemma}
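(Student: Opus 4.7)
The plan is to take the middle arrow to be the restriction $\psi := \pi_b|_{\mathrm{Sp}_{2n}(\mathbb{Z})[a]}$ of the reduction map $\pi_b : \mathrm{Sp}_{2n}(\mathbb{Z}) \to \mathrm{Sp}_{2n}(\mathbb{Z}/b)$, and the leftmost arrow to be the inclusion $\mathrm{Sp}_{2n}(\mathbb{Z})[ab] \hookrightarrow \mathrm{Sp}_{2n}(\mathbb{Z})[a]$ (well defined since $a \mid ab$). I would then verify exactness at each of the three terms in turn.

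Exactness at $\mathrm{Sp}_{2n}(\mathbb{Z})[ab]$ is just the injectivity of the inclusion, which is immediate. For exactness at $\mathrm{Sp}_{2n}(\mathbb{Z})[a]$, I compute $\ker \psi = \mathrm{Sp}_{2n}(\mathbb{Z})[a] \cap \mathrm{Sp}_{2n}(\mathbb{Z})[b]$: if $M = I_{2n} + a A = I_{2n} + b B$ for integer matrices $A,B$, then $aA = bB$, and the coprimality $\gcd(a,b)=1$ forces $a \mid B$; writing $B = aB'$ gives $M = I_{2n} + ab B' \in \mathrm{Sp}_{2n}(\mathbb{Z})[ab]$. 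The reverse inclusion is obvious.

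The main content is the surjectivity of $\psi$, and here I would exploit the explicit generating set supplied by Theorem \ref{CP} applied at the prime $a$. Each generator $\mathcal{G}(a) \in \{\mathcal{X}_{i,j}(a), \mathcal{Y}_{i,j}(a), \mathcal{Z}_{i,j}(a), \mathcal{W}_i(a), \mathcal{U}_1(a)\}$ has the form $I_{2n} + a N$, where a short block-matrix computation shows $N^2 = 0$. Hence $\mathcal{G}(a)^k = I_{2n} + ka N = \mathcal{G}(ka)$ for every $k \in \mathbb{Z}$. Because $\gcd(a,b)=1$, the element $a$ is invertible modulo $b$; choosing $k \equiv a^{-1} \pmod b$ yields $\pi_b(\mathcal{G}(a)^k) \equiv \mathcal{G}(1) \pmod b$. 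Thus the image of $\psi$ contains $\mathcal{X}_{i,j}(1), \mathcal{Y}_{i,j}(1), \mathcal{Z}_{i,j}(1), \mathcal{W}_i(1), \mathcal{U}_1(1)$ modulo $b$; these are the standard elementary symplectic generators of $\mathrm{Sp}_{2n}(\mathbb{Z})$, and their mod-$b$ reductions generate $\mathrm{Sp}_{2n}(\mathbb{Z}/b)$.

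The main obstacle is this final surjectivity step, which is the reason Theorem \ref{CP} is invoked. A shorter alternative would be to combine the CRT isomorphism $\mathrm{Sp}_{2n}(\mathbb{Z}/ab) \cong \mathrm{Sp}_{2n}(\mathbb{Z}/a) \times \mathrm{Sp}_{2n}(\mathbb{Z}/b)$ with the surjectivity of $\mathrm{Sp}_{2n}(\mathbb{Z}) \to \mathrm{Sp}_{2n}(\mathbb{Z}/ab)$, lifting $(I_{2n}, M)$ for any $M \in \mathrm{Sp}_{2n}(\mathbb{Z}/b)$ back to $\mathrm{Sp}_{2n}(\mathbb{Z})[a]$; but the generator-based argument above is more in keeping with the explicit style of the paper, and avoids appealing to surjectivity at composite level.
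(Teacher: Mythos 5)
Your proposal is correct and follows essentially the same route as the paper: the middle map is the mod-$b$ reduction restricted to $\mathrm{Sp}_{2n}(\mathbb{Z})[a]$, surjectivity is obtained by raising the Church--Putman generators at level $a$ to a power $k$ with $ka \equiv 1 \pmod b$, and the kernel is identified with $\mathrm{Sp}_{2n}(\mathbb{Z})[ab]$ via coprimality of $a$ and $b$. If anything, your version is slightly more careful than the paper's, since you verify $N^2=0$ to justify $\mathcal{G}(a)^k = \mathcal{G}(ka)$ and you use all five generator types rather than only $\mathcal{X}_{i,j}$ and $\mathcal{Y}_{i,j}$.
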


\begin{proof}
	The map $\mathrm{Sp}_{2n}(\mathbb{Z})[a] \rightarrow \mathrm{Sp}_{2n}(\mathbb{Z}/b)$ sends every matrix $A \in \mathrm{Sp}_{2n}(\mathbb{Z})[a]$ into its $\mathrm{mod}(b)$ reduction. First, we prove the surjectivity of the latter map. The generators of $\mathrm{Sp}_{2n}(\mathbb{Z}/b)$ are $\mathcal{X}_{i,j}(1) \: \mathrm{mod}(b)$ and $\mathcal{Y}_{i,j}(1) \: \mathrm{mod}(b)$ where $1 \leq i < j \leq n$. Define $n$ to be the solution of the equation $an \equiv 1 \: \mathrm{mod}(b)$. Then, $\mathcal{X}_{i,j}(a)^n \equiv \mathcal{X}_{i,j}(1) \: \mathrm{mod}(b)$ and $\mathcal{Y}_{i,j}(a)^n \equiv \mathcal{Y}_{i,j}(1) \: \mathrm{mod}(b)$. This proves the surjectivity of the reduction map. The kernel of this reduction map contains matrices which satisfy $I_{2n} + a A \equiv I_{2n} \: \mathrm{mod}(b)$. But since $a$ and $b$ are relatively primes, the latter equivalence holds if and only if $A = bB$ when $B$ is a symplectic matrix.
\end{proof}

The following proposition gives a useful decomposition of $\mathrm{Sp}_{2n}(\mathbb{Z}/m)$ \cite[Theorem 5]{NS}.

\begin{proposition}[Newman-Smart]
	\label{newman}
	Let $m \in \mathbb{N}$ and write $m = p^{k_1}_{1} p^{k_2}_{2}...p^{k_l}_{l}$, where $p^{k_i}_{i}$ are powers of prime numbers. Then
	\[ \mathrm{Sp}_{2n}(\mathbb{Z}/m) = \bigoplus^{l}_{i=1} \mathrm{Sp}_{2n}(\mathbb{Z}/p^{k_i}_{i}). \]
\end{proposition}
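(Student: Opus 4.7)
The plan is to deduce the group-level decomposition from the corresponding ring-level decomposition, i.e.\ from the Chinese Remainder Theorem applied to $\mathbb{Z}/m$. Since the $p_i$ are distinct primes, the prime powers $p_i^{k_i}$ are pairwise coprime, so CRT gives a ring isomorphism
\[
\phi : \mathbb{Z}/m \;\xrightarrow{\;\sim\;}\; \bigoplus_{i=1}^{l} \mathbb{Z}/p_i^{k_i},
\qquad x \longmapsto (x \bmod p_1^{k_1}, \dots, x \bmod p_l^{k_l}).
\]
Applying $\phi$ entrywise produces an isomorphism of matrix rings $M_{2n}(\mathbb{Z}/m) \cong \bigoplus_{i=1}^{l} M_{2n}(\mathbb{Z}/p_i^{k_i})$, which I will use as the backbone of the argument.

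First I would check that this entrywise map sends $\mathrm{Sp}_{2n}(\mathbb{Z}/m)$ into $\bigoplus_{i=1}^{l} \mathrm{Sp}_{2n}(\mathbb{Z}/p_i^{k_i})$. If $A \in \mathrm{Sp}_{2n}(\mathbb{Z}/m)$ and $A_i$ denotes its reduction modulo $p_i^{k_i}$, then from $A^T J A \equiv J \pmod m$ and the fact that $p_i^{k_i}$ divides $m$ we immediately get $A_i^T J A_i \equiv J \pmod{p_i^{k_i}}$, so $A_i \in \mathrm{Sp}_{2n}(\mathbb{Z}/p_i^{k_i})$. The map $A \mapsto (A_1,\dots,A_l)$ is a group homomorphism since entrywise reduction is a ring homomorphism.

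Next I would establish injectivity: if $(A_1,\dots,A_l)$ is the trivial tuple, then every entry of $A$ reduces to the corresponding entry of $I_{2n}$ modulo each $p_i^{k_i}$, and hence by CRT applied entrywise, $A \equiv I_{2n} \pmod m$. For surjectivity, given any tuple $(A_1,\dots,A_l)$ with each $A_i \in \mathrm{Sp}_{2n}(\mathbb{Z}/p_i^{k_i})$, apply CRT coordinatewise to produce a matrix $A \in M_{2n}(\mathbb{Z}/m)$ whose reduction modulo $p_i^{k_i}$ equals $A_i$ for all $i$. Then $A^T J A - J$ is a matrix all of whose entries are divisible by each $p_i^{k_i}$, and since the $p_i^{k_i}$ are pairwise coprime, these entries are divisible by $m$. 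Hence $A^T J A \equiv J \pmod m$, so $A \in \mathrm{Sp}_{2n}(\mathbb{Z}/m)$.

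The only subtle point to verify is that the constructed $A$ actually lies in $\mathrm{GL}_{2n}(\mathbb{Z}/m)$, not merely in $M_{2n}(\mathbb{Z}/m)$; but this is automatic, because the relation $A^T J A = J$ forces $\det(A)^2 = 1$ in $\mathbb{Z}/m$, so $\det(A)$ is a unit. I don't foresee a genuine obstacle in this argument: it is essentially a direct translation of CRT from rings to the symplectic subgroup, and the mild care needed is only in confirming that CRT respects both the symplectic form and invertibility, which both come out of the same determinant identity.
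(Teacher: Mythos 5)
Your argument is correct, and it is the standard one. The paper itself offers no proof of this proposition --- it is quoted from Newman--Smart \cite[Theorem 5]{NS} --- so there is nothing internal to compare against; your CRT argument is essentially the proof that underlies the cited result. All the steps check out: entrywise CRT gives the ring isomorphism $M_{2n}(\mathbb{Z}/m)\cong\bigoplus_i M_{2n}(\mathbb{Z}/p_i^{k_i})$, the symplectic condition is preserved in both directions because $p_i^{k_i}\mid m$ one way and pairwise coprimality the other way, and invertibility of the lifted matrix follows from $\det(A)^2=1$. The only remark worth making is that you are (sensibly) using the standard definition of $\mathrm{Sp}_{2n}(\mathbb{Z}/m)$ as symplectic matrices with entries in $\mathbb{Z}/m$; the paper's displayed definition literally takes integer matrices satisfying the congruence $A^TJA\equiv J \pmod m$, and reconciling the two readings requires the (true, but separate) surjectivity of the reduction $\mathrm{Sp}_{2n}(\mathbb{Z})\to\mathrm{Sp}_{2n}(\mathbb{Z}/m)$. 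Under the standard reading your proof is complete as written.
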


Newman-Smart also proved that the abelian group $\mathfrak{sp}_{2n}(\mathbb{Z}/l)$ can be expressed as a quotient of congruence subgroups of $\mathrm{Sp}_{2n}(\mathbb{Z})$, \cite[Theorem 7]{NS}.

\begin{proposition}[Newman-Smart]
	Let $l,m \geq 2$ such that $l$ divides $m$. Then we have the following isomorphism.
	$$\mathrm{Sp}_{2n}(\mathbb{Z})[m] / \mathrm{Sp}_{2n}(\mathbb{Z})[ml] \cong \mathfrak{sp}_{2n}(\mathbb{Z}/l).$$
	\label{comutsympl}
\end{proposition}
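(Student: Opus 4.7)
The plan is to construct an explicit surjective abelian-group homomorphism
\[ \phi : \mathrm{Sp}_{2n}(\mathbb{Z})[m] \longrightarrow \mathfrak{sp}_{2n}(\mathbb{Z}/l), \qquad I_{2n} + m A \longmapsto A \bmod l, \]
whose kernel is exactly $\mathrm{Sp}_{2n}(\mathbb{Z})[ml]$, and then conclude by the first isomorphism theorem.

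First I would check that $\phi$ is well-defined and lands in $\mathfrak{sp}_{2n}(\mathbb{Z}/l)$. Every element of $\mathrm{Sp}_{2n}(\mathbb{Z})[m]$ is uniquely of the form $I + mA$ with $A \in M_{2n}(\mathbb{Z})$. Expanding the symplectic identity $(I + mA)^{T} J (I + mA) = J$ and dividing by $m$ yields $A^{T} J + J A + m A^{T} J A = 0$; since $l$ divides $m$, the last term vanishes modulo $l$, so $A \bmod l$ lies in the Lie algebra. Next, $(I + m A)(I + m B) = I + m(A + B + m A B)$, and the correction $m A B$ is again killed modulo $l$, so $\phi$ is a homomorphism into the additive group $\mathfrak{sp}_{2n}(\mathbb{Z}/l)$. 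The kernel is immediate: $\phi(I + mA) = 0$ iff $A = l A'$ for some integer matrix $A'$, iff $I + m A = I + (ml) A' \in \mathrm{Sp}_{2n}(\mathbb{Z})[ml]$.

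For surjectivity I would exploit Theorem~\ref{CP}: each of the Church--Putman generators $\mathcal{X}_{i,j}(m)$, $\mathcal{Y}_{i,j}(m)$, $\mathcal{Z}_{i,j}(m)$, $\mathcal{W}_{i}(m)$, $\mathcal{U}_{1}(m)$ has the form $I + m M$ where $M \in \mathfrak{sp}_{2n}(\mathbb{Z})$ and a blockwise check shows $M^{2} = 0$. For such matrices the full symplectic condition collapses to $M \in \mathfrak{sp}_{2n}(\mathbb{Z})$, so each of these elementary matrices genuinely sits in $\mathrm{Sp}_{2n}(\mathbb{Z})[m]$ and maps under $\phi$ to $M \bmod l$. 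The task is then to verify that the reductions $M \bmod l$ of these elementary pieces additively generate the whole $\mathfrak{sp}_{2n}(\mathbb{Z}/l)$.

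The main obstacle will be the Cartan (diagonal) directions of $\mathfrak{sp}_{2n}(\mathbb{Z}/l)$, spanned by the matrices $\mathrm{diag}(e_{i,i}, -e_{i,i})$. These are \emph{not} square-zero, so the naive lift $I + m \, \mathrm{diag}(e_{i,i}, -e_{i,i})$ is not symplectic. To hit them I would either combine the generator $\mathcal{U}_{1}(m)$ with the appropriate $\mathcal{X}$ and $\mathcal{Y}$ inside $\phi$ (using that $\phi$ lands in an abelian group and that $l \mid m$ kills mixed correction terms), or construct by hand a symplectic matrix of the shape $I + m \, \mathrm{diag}(e_{i,i}, -e_{i,i}) + m^{2}(\text{correction})$, where the correction is chosen to cancel the obstruction $m^{2} M^{T} J M$ in the symplectic identity; any such correction is automatically killed modulo $l$ because $l \mid m$, so the image under $\phi$ is still the desired Cartan generator. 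This step is where one actually has to compute on a basis of $\mathfrak{sp}_{2n}(\mathbb{Z}/l)$; everything else is formal.
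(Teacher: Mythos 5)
Your plan is sound, but note first that the paper does not prove this proposition at all: it is quoted from Newman--Smart \cite[Theorem 7]{NS}, so there is no in-paper argument to compare against. What you propose is essentially the standard (and, up to presentation, Newman--Smart's own) proof: the map $I_{2n}+mA \mapsto A \bmod l$ is well defined, lands in $\mathfrak{sp}_{2n}(\mathbb{Z}/l)$ because $l \mid m$ kills the quadratic term $mA^{T}JA$, is a homomorphism for the same reason, and has kernel exactly $\mathrm{Sp}_{2n}(\mathbb{Z})[ml]$ --- all of your verifications here are correct. Two small points on the surjectivity step, which you rightly flag as the only place requiring work. First, you do not actually need the generation statement of Theorem~\ref{CP} (which is stated only for $p$ prime); you only need that the explicit matrices $I+mM$ with $M \in \mathfrak{sp}_{2n}(\mathbb{Z})$ and $M^{2}=0$ are symplectic for arbitrary $m$, which holds since then $M^{T}JM=-JM^{2}=0$. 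It would be cleaner to say so, since otherwise a reader may object that $m$ here is not prime. Second, your first proposed fix for the Cartan directions does work and is the efficient one: the image of $\mathcal{U}_{1}(m)$ under $\phi$ is $\mathrm{diag}(e_{1,1},-e_{1,1})$ plus symmetric off-diagonal blocks already hit by $\mathcal{X}_{1,1}(m)$ and $\mathcal{Y}_{1,1}(m)$, and the remaining diagonal directions $e_{i,i}-e_{i+1,i+1}$ come from the $\mathcal{W}_{i}(m)$ (whose defining matrices $\beta_i$ are themselves square-zero), so no $m^{2}$-corrected lift needs to be constructed by hand; you should say explicitly that the $\mathcal{W}_i$ are used here, since $\mathcal{U}_1$ together with $\mathcal{X},\mathcal{Y}$ alone only reaches $\mathrm{diag}(e_{1,1},-e_{1,1})$. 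With those two sentences added, your argument is a complete, self-contained proof of a result the paper merely cites.
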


Lemma 3.2 and Propositions 3.3 and 3.4 play crucial role in Section 5, in which we explore the structure of congruence subgroups of braid groups.

\section{Topological interpretation of prime level congruence subgroups}

The purpose of this section is the characterization of the group $B_{2g+b}[p]$ when $p$ is prime. Since $B_{2g+b}\cong\mathrm{SMod}(\Sigma^b_g)$, it is convenient to study the kernel of the map
\[
 \mathrm{SMod}(\Sigma^b_{g}) \rightarrow
  \begin{cases} 
      \hfill \mathrm{Sp}_{2g}(\mathbb{Z}/p)    \hfill & \text{ if $b=1$}, \\
      \hfill (\mathrm{Sp}_{2g+2}(\mathbb{Z}/p))_{y_{g+1}} \hfill & \text{ if $b=2$}\\
  \end{cases}
\]
and we denote the map again by $\rho_p$. Also, we denote the kernel of $\rho_p$ by $B_{2g+b}[p]$.

A'Campo proved that the homomorphism $\rho_p$ is surjective \cite[Theorem 1 (1)]{C1}. Later Assion gave a presentation for $\mathrm{Sp}_{2g}(\mathbb{Z}/3)$ and $(\mathrm{Sp}_{2g+2}(\mathbb{Z}/3))_{y_{g+1}}$ as quotients of braid groups \cite{A3}. Wajnryb improved the result of Assion and generalized it for any prime number greater than 2 \cite[Theorem 1]{W}. We begin with the theorem of Wajnryb.

\begin{theorem}[Wajnryb]
\label{WA}
Consider the curves $c_i$ depicted in Figure \ref{hyperm}. Let $G_{2g+b}$ be a group with generators $T_{c_1},...,T_{c_{2g+b-1}}$ and relations $R1$ to $R6$ as follows.
\begin{enumerate}
\item[R1.] $\begin{aligned}[t]
T_{c_i} T_{c_{i+1}} T_{c_i} = T_{c_{i+1}} T_{c_i} T_{c_{i+1}};
\end{aligned}$

\item[R2.] $\begin{aligned}[t]
[T_{c_i},T_{c_j}]=1, \quad \mathrm{for} \: \vert i - j \vert >1;
\end{aligned}$

\item[R3.] $\begin{aligned}[t]
T^p_{c_1} = 1;
\end{aligned}$

\item[R4.] $\begin{aligned}[t]
(T_{c_1} T_{c_2})^6 = 1, \quad \mathrm{for} \: p > 3;
\end{aligned}$

\item[R5.] $\begin{aligned}[t]
T^{(p-1)/2}_{c_1} T^4_{c_2} T^{-(p-1)/2}_{c_1} = T^{2}_{c_2} T_{c_1} T^{-2}_{c_2},  \quad \mathrm{for} \: p > 3; \: \mathrm{and}
\end{aligned}$

\item[R6.] $\begin{aligned}[t]
(T_{c_1} T_{c_2} T_{c_3})^4 = A T^2_{c_1} A^{-1}, \: \mathrm{for} \: n > 4, \: \mathrm{where} \: A = T_{c_4} T^2_{c_3} T_{c_4} T^{(p-1)/2}_{c_2} T^{-1}_{c_3} T_{c_2}. 
\end{aligned}$
\end{enumerate}
Then $G_{2g+1}$ is isomorphic to $\mathrm{Sp}_{2g}(\mathbb{Z}/p)$, and $G_{2g+2}$ is isomorphic to $(\mathrm{Sp}_{2g+2}(\mathbb{Z}/p))_{y_{n+1}}$.
\end{theorem}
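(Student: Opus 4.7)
The plan is to define the map $\phi : G_{2g+b} \to \mathrm{Sp}_{2g}(\mathbb{Z}/p)$ (respectively $(\mathrm{Sp}_{2g+2}(\mathbb{Z}/p))_{y_{g+1}}$) by $T_{c_i} \mapsto T_{[c_i]}$, where $T_{[c_i]}$ is the symplectic transvection along the homology class of $c_i$, and then to show that $\phi$ is an isomorphism.

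The first step is to check that $\phi$ is well-defined by verifying that R1--R6 are satisfied by the matrices $T_{[c_i]}$. Relations R1 and R2 follow from the intersection pattern of the $c_i$: transvections about curves meeting once satisfy the braid relation, and those about disjoint curves commute. Relation R3 is immediate since $T_{[c_1]}^p([x]) = [x] + p\,\iota_a(x, c_1)[c_1] \equiv [x] \pmod{p}$. Relations R4, R5, R6 involve only a bounded number of consecutive $c_i$, so they can be verified by direct matrix calculation in an explicit symplectic basis (R4 is the order-$6$ relation in $\mathrm{PSL}_2(\mathbb{Z}/p)$, R5 encodes a conjugation identity among the corresponding $2\times 2$ transvections, and R6 is a $4\times 4$ check). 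Surjectivity of $\phi$ is then immediate from A'Campo's theorem, since $\rho_p$ is onto and the transvections $T_{[c_i]}$ are precisely the images of the Artin generators $\sigma_i$.

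The heart of the argument is injectivity, which I would establish by bounding $|G_{2g+b}|$ from above by the order of the target, proceeding by induction on $g$. The natural strategy is to let $G_{2g+b}$ act on the set of nonzero (or primitive) vectors in the symplectic module $(\mathbb{Z}/p)^{2g}$ via $\phi$, show transitivity using R3--R6, and identify the stabilizer of a fixed standard basis vector with a quotient of $G_{2(g-1)+b'}$ for appropriate $b'$. Combined with the base case $G_3 \cong \mathrm{SL}_2(\mathbb{Z}/p)$, which reduces to the classical presentation of $\mathrm{SL}_2(\mathbb{Z}/p)$ by two generators of orders $p$ and $3$ satisfying R4 and R5, the orbit-stabilizer formula gives $|G_{2g+b}| \leq |\text{target}|$, which then forces equality and hence injectivity of $\phi$.

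The main obstacle is this transitivity-and-stabilizer computation in the inductive step. Relations R4 and R5 encode the torsion relations inside $\mathrm{SL}_2(\mathbb{Z}/p)$ that govern each local $2\times 2$ block, while R6 is the delicate relation allowing one to move a vector across non-adjacent symplectic blocks and to trim the stabilizer down to exactly $G_{n-2}$ without introducing extra elements. The even-$b$ case requires additional care, since one works inside the subgroup fixing $y_{g+1}$ rather than the full symplectic group, so the base case and the stabilizer identification have to be adjusted accordingly.
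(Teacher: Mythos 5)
This theorem is not proved in the paper at all: it is quoted verbatim from Wajnryb \cite[Theorem 1]{W} (building on Assion \cite{A3}), so there is no in-paper argument to compare yours against. Judged on its own terms, your proposal correctly identifies the two easy parts and the standard overall strategy. Well-definedness of $\phi$ is indeed a finite matrix check (the paper itself carries out the mod-$p$ verifications of R4, R5, R6 in Section 4, via the chain relation and Lemmas \ref{insert} and \ref{rel6}), and surjectivity does follow from A'Campo. The induction-plus-orbit-counting scheme for injectivity is also essentially the one used in the literature for presentations of finite symplectic groups.

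The genuine gap is that the entire mathematical content of the theorem sits inside the step you label ``the main obstacle'' and then do not carry out. To run orbit--stabilizer you must prove, \emph{using only the relations R1--R6 in the abstract group $G_{2g+b}$}, (i) that $G_{2g+b}$ acts transitively on the relevant set of vectors, and (ii) that the stabilizer of a standard vector is generated by elements which satisfy the relations of $G_{2(g-1)+b'}$ and \emph{nothing more} --- i.e.\ a coset-enumeration argument showing every word reduces to (stabilizer element)$\times$(coset representative). Without (ii) you only get that the stabilizer surjects onto a quotient of the smaller group, which gives an inequality in the wrong direction and no bound on $|G_{2g+b}|$. This is precisely where R6 earns its keep, and it is delicate: for $p=3$ the relations R4 and R5 are absent, so the base case and the local $\mathrm{SL}_2(\mathbb{Z}/3)$ analysis are different, and your appeal to ``the classical presentation of $\mathrm{SL}_2(\mathbb{Z}/p)$ by two generators of orders $p$ and $3$'' needs an actual derivation that R1, R3, R4, R5 imply such a presentation (including deducing $T_{c_2}^p=1$ from R1 and R3). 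As written, the proposal is a plausible plan whose decisive steps are asserted rather than established; completing them is essentially reproving Wajnryb's theorem.
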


As a consequence of Theorem \ref{WA} we obtain elements of $\mathrm{SMod}(\Sigma^b_{g})$ which normally generate $B_{2g+b}[p]$.

%

In the rest of the section we examine the elements of the relations of Theorem \ref{WA} in order to give a topological description for the generators of $B_n[p]$. We note that relations $R1$ and $R2$ are the defining relations in the presentation of the braid group.

We denote by $[c_i]$ the homology class of $c_i$, and by $T_{[c_i]}$ the transvection associated to the Dehn twist $T_{c_i}$ under the map
\[
  \mathrm{SMod}(\Sigma^b_{g}) \rightarrow
  \begin{cases} 
      \hfill \mathrm{Sp}_{2g}(\mathbb{Z}/p)    \hfill & \text{ if $b=1$}, \\
      \hfill (\mathrm{Sp}_{2g+2}(\mathbb{Z}/p))_{y_{g+1}} \hfill & \text{ if $b=2$}. \\
  \end{cases}
\]
By definition, the action of a transvection $T^m_{[c]}$ on an element $u \in \mathrm{H}_1(\Sigma^1_{g},\mathbb{Z})$ (respectively $\mathrm{H}^P_1(\Sigma^2_{g},\mathbb{Z})$) is defined to be $T^m_{[c]}(u) = [u] + m \hat{i}(u,[c])[c]$, where $\hat{i}$ stands for the algebraic intersection number.

\paragraph{\textbf{R3}: Powers of Dehn twists.} The $p^{th}$ powers of Dehn twists about symmetric nonseparating simple closed curves are easy to check by looking at their image in the symplectic group. The symplectic representation sends $T^p_{c_1}$ into the following matrix:

\[ \left( \begin{array}{ccc}
1 & p\\
0 & 1\\
\end{array} \right) \oplus I,  \]
where $I$ stands for the identity matrix of dimension depending on $g$ and $b$ (see Section 7.1.3). The $\mathrm{mod}(p)$ reduction of the matrix above is the identity. Moreover, every Dehn twist about a non-separating curve is conjugate to $T_{c_1}$. As a consequence, every Dehn twist in $\mathrm{SMod}(\Sigma^b_{g})$ raised to the power of $p$ lies in $B_n[p]$.

\paragraph{$\textbf{R4}$: Symmetric separating Dehn twists.} By the chain relation the element $(T_{c_1} T_{c_2})^6$ can be represented by a Dehn twist $T_{\gamma}$, where $\gamma$ is the symmetric separating curve bounding the genus 1 subsurface of $\Sigma^b_{g}$ as indicated in Figure \ref{refive} \cite[Proposition 4.12]{BFM}. We can generalize the relation R4 by considering a symmetric separating curve $\delta$ of a genus $k$ subsurface of $\Sigma^b_{g}$. By the chain relation there is a maximal chain of curves $a_1,...,a_{2k}$ in the subsurface of genus $k$ with boundary $\delta$ such that $(T_{a_1}...T_{a_{2k}})^{4k+2} = T_{\delta}$.

The fact that every symmetric separating simple closed curve $\delta$ is nullhomologous in $H_1(\Sigma^1_g)$ (respectively $H^P_1(\Sigma^2_g)$) implies that $T_{[\delta]}(x)=x + \iota_a(x,[\delta])=x+0=x$ for every $x \in H_1(\Sigma^1_g)$ (respectively $H^P_1(\Sigma^2_g)$), where $T_{[\delta]}$ is the corresponding transvection of $T_{\delta}$ as described in Section 2. Since for every symmetric separating curve $\delta$ in $\Sigma^b_{g}$ and $T_{\delta} \in B_{2g+b}[p]$ we have that $(T_{a_1}...T_{a_{2k}})^{4k+2} \in \mathcal{SI}(\Sigma^b_{g}) \subset  B_{2g+b}[p]$.

\begin{figure}[h]
\begin{center}
\includegraphics[scale=.3]{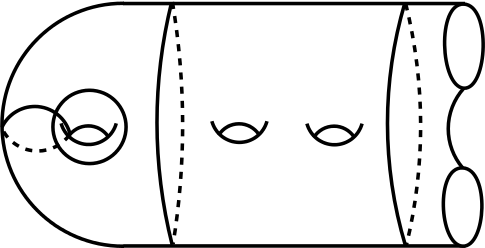}
\end{center}
\caption{The curve $\gamma$ that bound a surface of genus 1.}
\label{refive}
\begin{picture}(22,12)
\put(190,40){$\gamma$}
\put(143,75){$c_1$}
\put(170,62){$c_2$}
\end{picture}
\end{figure}

\paragraph{$\textbf{R5}$: Mod-p involution maps.} We begin by modifying the relation $R5$ of Theorem \ref{WA}.

\begin{lemma}
The relation $R5$ given above is equivalent to:
$$(T^{(p+1)/2}_{c_1} T^{4}_{c_2})^2 = (T_{c_1} T_{c_2})^3$$
in $\mathrm{Sp}_{2g}(\mathbb{Z}/p)$ (respectively $(\mathrm{Sp}_{2g+2}(\mathbb{Z}/p))_{y_{g+1}}$).
\label{insert}
\end{lemma}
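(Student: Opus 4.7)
Write $a = T_{c_1}$, $b = T_{c_2}$, $\alpha = (p-1)/2$, and $\beta = (p+1)/2$, so that $\alpha + \beta = p$ and $\beta - \alpha = 1$. The relation R3 gives $a^p = 1$, so $a^{-\alpha} = a^{p-\alpha} = a^\beta$. The plan is to rewrite R5 using this identity, conjugate and multiply by the appropriate power of $b$, and then recognize the result as $(a^\beta b^4)^2 = (ab)^3$ via one short identity that follows from the braid relation alone.

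\textbf{Key identity.} The braid relation R1 gives
\[
(ab^2)^2 \;=\; a\,b(bab)b \;=\; a\,b(aba)b \;=\; ababab \;=\; (ab)^3,
\]
where we substitute the underlined $bab$ by $aba$. This identity is what links the two forms of the relation and requires nothing more than R1.

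\textbf{From R5 to the new form.} Starting from
\[
a^{\alpha} b^4 a^{-\alpha} = b^2 a b^{-2},
\]
replace $a^{-\alpha}$ by $a^\beta$, left-multiply by $a$, and use $\alpha+1=\beta$ to get $a^\beta b^4 a^\beta = a b^2 a b^{-2}$. Right-multiplying by $b^4$ yields
\[
(a^\beta b^4)^2 \;=\; a^\beta b^4 a^\beta b^4 \;=\; ab^2 a b^{-2}\cdot b^4 \;=\; (ab^2)^2 \;=\; (ab)^3,
\]
using the key identity at the last step.

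\textbf{Converse.} Running the same chain backwards: from $(a^{\beta}b^4)^2 = (ab)^3 = (ab^2)^2$ cancel $b^4$ on the right to get $a^\beta b^4 a^\beta = ab^2 a b^{-2}$; left-multiply by $a^{-1}$, and use $\beta-1=\alpha$ together with $a^\beta = a^{-\alpha}$ (from R3) to recover R5. The main ``obstacle'' here is really just spotting the identity $(ab^2)^2=(ab)^3$; once that is in hand the rest is elementary rearrangement, and R3 is the only auxiliary relation of Theorem \ref{WA} that enters the equivalence.
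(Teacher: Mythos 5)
Your proof is correct and follows essentially the same route as the paper's: both hinge on the identity $(T_{c_1}T_{c_2}^2)^2=(T_{c_1}T_{c_2})^3$ obtained from the braid relation, together with $T_{c_1}^{-(p-1)/2}=T_{c_1}^{(p+1)/2}$ from R3, and then elementary rearrangement. Your write-up is in fact slightly more explicit than the paper's in flagging exactly where R3 enters.
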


\begin{proof}
We have that $(T_{c_1} T_{c_2})^3 = T_{c_1} T^{2}_{c_2} T_{c_1} T^{2}_{c_2}$. Then
\[ T^{(p-1)/2}_{c_1} T^4_{c_2} T^{-(p-1)/2}_{c_1} = T^{-1}_{c_1} (T^{(p+1)/2}_{c_1} T^4_{c_2})^2 T^{-4}_{c_2} = T^{2}_{c_2} T_{c_1} T^{-2}_{c_2}. \]
On the other hand
\[ (T^{(p+1)/2}_{c_1} T^{4}_{c_2})^2 = T_{c_1} T^{(p-1)/2}_{c_1} T^4_{c_2} T^{-(p-1)/2}_{c_1} T^4_{c_2} = T_{c_1} T^2_{c_2} T_{c_1} T^2_{c_2}. \] 
\end{proof}

Now we examine the relation of Lemma \ref{insert}. 
\paragraph{RHS.} For $i=1,2$, $(T_{c_1} T_{c_2})^3([c_i]) = -[c_i]$, where $[c_i]$ stands for the homology class of $c_i$. Thus, the homeomorphism $(T_{c_1} T_{c_2})^3$ acts as the hyperelliptic involution on the subsurface bounded by the boundary of the chain $ch(c_1,c_2)$ (see Figure \ref{refive}).
\paragraph{LHS.} We have
\begin{align*}
 (T^{(p+1)/2}_{c_1} T^{4}_{c_2})^2([c_1]) = -8p[c_2] + (4p^2 +2p -1)[c_1] \equiv -[c_1] \: \bmod(p),\\
(T^{(p+1)/2}_{c_1} T^{4}_{c_2})^2([c_2]) = 2p \frac{p+1}{2} [c_1] - (2p+1)[c_2] \equiv -[c_2] \: \bmod(p)
\end{align*}
Therefore, $(T^{(p+1)/2}_{c_1} T^{4}_{c_2})^2$ acts as the hyperelliptic involution $\bmod(p)$ in the subspace of $\mathrm{H}_1(\Sigma^1_g,\mathbb{Z}/p)$ (resp $\mathrm{H}^P_1(\Sigma^2_g,\mathbb{Z}/p)$) spanned by $[c_1],[c_2]$.\\

We can generalize Relation $R5$ as follows. For $k$ even, consider any chain $ch(a_1,a_2,...,a_k)$ of symmetric simple closed curves such that $T_{a_i} \in \mathrm{SMod}(\Sigma_{g,b})$ for all $i \leq k$. Choose an $f \in \mathrm{SMod}(\Sigma^b_g)$ such that $f([a_i]) = -[a_i]$. Then $(T_{a_1}...T_{a_k})^{k+1} f^{-1} \in B_{2g+b}[p]$. We call this type of element an \emph{mod-p involution map}.

\paragraph{$\textbf{R6}$: Mod-p center maps.} We describe a generalized version of $(T_{c_1} T_{c_2} T_{c_3})^4 (A T^{-2}_{c_1} A^{-1})$. Let $A_1$ be the trivial homeomorphism in $\mathrm{SMod}(\Sigma^b_{g})$. For $k$ odd, and $k\geq3$, define
\[ A_k = T_{c_{k+1}} T^{2}_{c_k} T_{c_{k+1}} T^{(p-1)/2}_{c_{k-1}} T^{-1}_{c_k} T_{c_{k-1}} A_{k-2}. \]
First, we deal with the case $b=1$. (For $b=2$ the process is exactly the same.) Consider the symplectic bases $\{ y_i,x_i \}$ for $\mathrm{H}_1(\Sigma^1_{g},\mathbb{Z})$ depicted on Figure \ref{sympb1}.

\begin{lemma}
For $k$ odd, we have that $A_k T_{[c_1]} A^{-1}_k = T_{[y_{(k+1)/2}]}$ in $\mathrm{Sp}_{2g}(\mathbb{Z}/p)$.
\label{rel6}
\end{lemma}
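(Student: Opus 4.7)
The plan is induction on odd $k \geq 1$. The base case $k = 1$ is immediate: $A_1$ is the identity by definition, and with the identification of the hyperelliptic chain in the symplectic basis depicted in Figure \ref{sympb1} one has $[c_1] = [y_1]$, so $A_1 T_{[c_1]} A_1^{-1} = T_{[y_1]}$.

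For the inductive step with $k \geq 3$ odd, I decompose $A_k = B_k A_{k-2}$, where
\[
B_k = T_{c_{k+1}} T_{c_k}^2 T_{c_{k+1}} T_{c_{k-1}}^{(p-1)/2} T_{c_k}^{-1} T_{c_{k-1}}.
\]
The inductive hypothesis gives $A_{k-2} T_{[c_1]} A_{k-2}^{-1} = T_{[y_{(k-1)/2}]}$, and the standard identity $\phi T_{[v]} \phi^{-1} = T_{[\phi(v)]}$ for $\phi$ in the symplectic group then reduces the inductive step to showing
\[
B_k\bigl([y_{(k-1)/2}]\bigr) \equiv [y_{(k+1)/2}] \pmod{p}.
\]

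To establish this, I apply the six transvection factors of $B_k$ in turn to $[y_{(k-1)/2}]$, using $T_{[c]}^m(v) = v + m\,\hat{i}(v,[c])[c]$. Of the three chain classes $[c_{k-1}], [c_k], [c_{k+1}]$, only $[c_{k-1}]$ has nonzero intersection with $[y_{(k-1)/2}]$, and their pairwise intersections follow the standard chain pattern $\hat{i}([c_j],[c_{j+1}]) = \pm 1$ with non-adjacent pairs orthogonal. The pivotal simplification happens at the central factor $T_{c_{k-1}}^{(p-1)/2}$: after the first two factors the intermediate vector carries both a $[y_{(k-1)/2}]$-term and a $[c_k]$-term pairing non-trivially with $[c_{k-1}]$, so the new coefficient of $[c_{k-1}]$ produced is $(p-1)/2 \cdot 2 = p-1 \equiv -1 \pmod{p}$, which kills the earlier $[c_{k-1}]$-contribution. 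The three remaining outer factors $T_{c_{k+1}} T_{c_k}^2 T_{c_{k+1}}$ then telescope against the surviving $[c_k]$-term, and what remains in the symplectic basis is exactly $[y_{(k+1)/2}] \pmod{p}$.

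The main obstacle is the bookkeeping. One needs a consistent identification of the hyperelliptic chain $\{[c_i]\}$ with the symplectic basis $\{[y_i],[x_i]\}$ of Figure \ref{sympb1} (for instance $[y_i] = \sum_{j \leq i}[c_{2j-1}]$ and $[x_i] = [c_{2i}]$), a fixed orientation convention for $\hat{i}$, and a careful verification that at each of the six stages the mod-$p$ reductions collapse every intermediate $[x_j]$- and extra $[y_j]$-term so that the final vector is precisely $[y_{(k+1)/2}]$. The case $b=2$ is handled by the same argument applied in $\mathrm{H}^P_1(\Sigma^2_g;\mathbb{Z}/p)$, as indicated in the paragraph preceding the lemma.
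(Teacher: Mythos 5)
Your proof is correct and follows essentially the same route as the paper's: both induct on odd $k$, use the identity $\phi T_{[v]} \phi^{-1} = T_{[\phi(v)]}$ to reduce the claim to a mod-$p$ homology computation, and verify that the new factor $T_{c_{k+1}} T_{c_k}^2 T_{c_{k+1}} T_{c_{k-1}}^{(p-1)/2} T_{c_k}^{-1} T_{c_{k-1}}$ contributes exactly $[c_k]$ (the paper states this as $A_k([c_1]) \equiv [c_1]+[c_3]+\cdots+[c_k]$, which is precisely your $[y_{(k+1)/2}]$, and its inductive step $B_k([c_{k-2}]) \equiv [c_{k-2}]+[c_k]$ is the same cancellation $\tfrac{p-1}{2}\cdot 2 \equiv -1 \pmod p$ that you identify). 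The only cosmetic differences are that you start the induction at $k=1$ instead of $k=3$ and track the full class $[y_{(k-1)/2}]$ rather than its last summand $[c_{k-2}]$.
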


Note that if $k=3$, then $T_{[y_2]} = T_{[d_3]}$.

\begin{proof}
We need to prove that $A_k([c_1]) \equiv [c_1] + [c_3] + ... + [c_k] \in \mathrm{Sp}_{2g}(\mathbb{Z}/p)$. A direct calculation shows that $A_3 ([c_1]) \equiv [c_1] + [c_3] \: \mathrm{mod}(p)$. Assume that the theorem is true for $k-2$, that is $A_{k-2}([c_1]) = [c_1] + [c_3] + ... + [c_{k-2}]$. Then $T_{c_{k+1}} T^{2}_{c_k} T_{c_{k+1}} T^{(p-1)/2}_{c_{k-1}} T^{-1}_{c_k} T_{c_{k-1}}([c_{k-2}]) \equiv [c_{k-2}] + [c_k] \bmod(p)$. The proof of the lemma follows.
\end{proof}

\begin{figure}[h]
\begin{center}
\includegraphics[scale=.35]{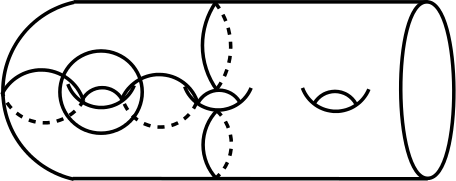}
\end{center}
\caption{The chain relation of $R6$.}
\begin{picture}(22,12)
\put(137,72){$c_1$}
\put(170,88){$c_2$}
\put(188,81){$c_3$}
\put(215,85){$d_3$}
\put(215,57){$d'_3$}
\end{picture}
\label{skatochain}
\end{figure}

Let $k$ be an odd integer, and consider also the odd chain $ch(c_1,c_2,...,c_k)$. By the chain relation we have that $(T_{c_1}...T_{c_k})^{k+1} = T_{d_k} T_{d'_k}$, where $d_k = y_{(k+1)/2}$, and $[d_k]=[d'_k]=[y_{(k+1)/2}]$ (see, for example, Figure \ref{skatochain}). Thus, $(T_{[c_1]}...T_{[c_k]})^{k+1} = T^2_{[y_{(k+1)/2}]} \in \mathrm{Sp}_{2g}(\mathbb{Z}/p)$. On the other hand, according to Lemma \ref{rel6} we have that $A_k T^2_{[c_1]} A^{-1}_k = T^2_{[y_{(k+1)/2}]} \in \mathrm{Sp}_{2g}(\mathbb{Z}/p)$. Hence, $(T_{c_1}...T_{c_k})^{k+1} A_k T^{-2}_{c_1} A^{-1}_k \in B_{n}[p]$. Note that if $k=3$, the element $(T_{c_1}...T_{c_k})^{k+1} A_k T^{-2}_{c_1} A^{-1}_k$ is the same one as in the relation 6 of Theorem \ref{WA}.

We can describe a generalized version of $(T_{c_1}...T_{c_k})^{k+1} A_k T^{-2}_{c_1} A^{-1}_k$. Consider any odd chain $ch(a_1,a_2,...,a_k)$, such that $T_{a_i} \in \mathrm{SMod}(\Sigma^1_{g})$ for all $i \leq k$. Choose a homeomorphism $h \in \mathrm{SMod}(\Sigma^1_{g})$ such that $h([a_1]) = [a_1]+[a_3]+...+[a_k] \in \mathrm{Sp}_{2g}(\Sigma^1_{g})$. Then $(T_{a_1}...T_{a_k})^{k+1} h T^{-2}_{a_1} h^{-1}$ lies on $B_{2g+1}[p]$. If we consider $(T_{a_1}...T_{a_k})^{k+1}$ as the center of the subgroup $K$ of $\mathrm{SMod}(\Sigma^b_{g})$ generated by $T_{a_1}...T_{a_k}$, then $h T^{-2}_{a_1} h^{-1}$ is the center $\bmod(p)$ of the same group. Note that the choice of $h$ is not unique. We call this type of element an \emph{mod-p center map}.

\paragraph{Generators for congruence subgroups.} As a corollary of Theorem \ref{WA} we obtain the following theorem.

\begin{theorem}
If $p=3$, then $B_{2g+b}[3]$ is generated by Dehn twists raised to the power of $3$, and for $2g+b>4$ by mod-p center maps. For $p>3$ the subgroup $B_{2g+b}[p]$ of $\mathrm{SMod}(\Sigma^b_{g})$ is generated by Dehn twists raised to the power of $p$, by Dehn twists about symmetric separating curves, by mod-p involution maps, and for $2g+b>4$ by mod-p center maps.
\label{congen}
\end{theorem}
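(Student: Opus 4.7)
The overall strategy is to identify $B_{2g+b}[p]$ as the normal closure in $B_{2g+b}\cong \mathrm{SMod}(\Sigma^b_g)$ of the specific relators listed in Wajnryb's presentation (Theorem \ref{WA}), and then observe that each such relator is a representative of one of the four classes of elements named in the statement, with each class being closed under conjugation by $\mathrm{SMod}(\Sigma^b_g)$.

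First I would appeal to Theorem \ref{WA}. Under the isomorphism $\xi$, the map $\rho_p$ factors through the quotient of $B_{2g+b}$ by the normal closure of the relators R1--R6 (or the appropriate subset: for $p=3$ only R1,R2,R3 and possibly R6 appear; for $p>3$ all of R1--R6 appear subject to $n>4$ for R6). Since relations R1 and R2 already hold in $B_{2g+b}$, Wajnryb's theorem is equivalent to saying that $B_{2g+b}[p]=\ker\rho_p$ is the normal closure in $B_{2g+b}$ of the set of relators drawn from R3, R4, R5, R6. The per-relator analysis carried out in Section~4 shows explicitly that each of these relators is an instance of one of the four classes in the statement: $T_{c_1}^p$ is a $p$-th power of a Dehn twist, $(T_{c_1}T_{c_2})^6=T_\gamma$ by the chain relation is a Dehn twist about a symmetric separating curve, Lemma \ref{insert} rewrites the R5 relator as a mod-$p$ involution map on the chain $c_1,c_2$, and Lemma \ref{rel6} exhibits the R6 relator as a mod-$p$ center map on the chain $c_1,c_2,c_3$.

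Next I would establish that each of the four classes is closed under conjugation by any element of $\mathrm{SMod}(\Sigma^b_g)$, by means of the change-of-coordinates principle. Conjugating a Dehn twist $T_c$ by $f\in\mathrm{SMod}(\Sigma^b_g)$ yields $T_{f(c)}$, and if $c$ is nonseparating, respectively symmetric separating, then so is $f(c)$. Likewise, any symmetric chain $\mathrm{ch}(a_1,\ldots,a_k)$ is carried by $f$ to another symmetric chain, and a homeomorphism $h$ chosen to realize the required homological action on $[a_1],\ldots,[a_k]$ is carried to a homeomorphism $fhf^{-1}$ doing the corresponding job on the image chain. Consequently, the conjugate of a mod-$p$ involution map (respectively, a mod-$p$ center map) is another mod-$p$ involution map (respectively, mod-$p$ center map). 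Let $N$ denote the subgroup of $\mathrm{SMod}(\Sigma^b_g)$ generated by the union of the four classes. By this closure under conjugation, $N$ is normal in $\mathrm{SMod}(\Sigma^b_g)=B_{2g+b}$.

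The two inclusions then close up quickly. Because $N$ contains the specific Wajnryb relators and is normal, it contains their normal closure, which by Wajnryb's theorem is $B_{2g+b}[p]$; hence $B_{2g+b}[p]\subseteq N$. Conversely, Section~4 verifies that every element in each of the four classes lies in $\ker\rho_p$, so $N\subseteq B_{2g+b}[p]$. This gives equality, and also yields the case $p=3$: since R4 and R5 are absent from Wajnryb's presentation at $p=3$, the relevant normal closure is generated by $p$-th powers of Dehn twists together with (if $2g+b>4$) mod-$3$ center maps. The step I expect to require the most care is the change-of-coordinates statement for mod-$p$ involution maps and mod-$p$ center maps, since these are composite elements that depend on an auxiliary homeomorphism $f$ or $h$ realizing a homological condition; one must argue that this auxiliary choice is transported naturally by conjugation and hence the conjugate still satisfies the defining homological condition on the transported chain, so that the class really is invariant under the $\mathrm{SMod}(\Sigma^b_g)$-action.
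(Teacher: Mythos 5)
Your proposal is correct and follows essentially the same route as the paper: the paper presents this theorem as a direct corollary of Wajnryb's presentation (Theorem \ref{WA}) together with the per-relator analysis of R3--R6 carried out in Section 4, which is exactly the argument you reconstruct. The only difference is that you make explicit the step the paper leaves implicit, namely passing from ``normal closure of the Wajnryb relators'' to ``subgroup generated by the four conjugation-invariant classes'' via the change-of-coordinates principle.
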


\paragraph{Finite set of generators.} It is well known that every finite index subgroup of a finitely generated group, is finitely generated \cite[Corollary 2.7.1]{MKS}. The generating set in Theorem \ref{congen} is infinite. When $p=3$ and $g=1$ we can find a finite set of generators.

\begin{theorem}
The group $B_3[3]$ is generated by four elements.
\label{cong3}
\end{theorem}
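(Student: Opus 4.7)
My plan is to show that $B_3[3] \cong F_3 \times \mathbb{Z}$, from which a 4-element generating set follows immediately. Recall that by Theorem \ref{congen} the group $B_3[3]$ is the normal closure of $\sigma_1^3$ in $B_3$; the task is to squeeze this infinite set of conjugates down to four actual generators.

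The first step is to identify the kernel of the integral symplectic representation $\rho \colon B_3 \to \mathrm{Sp}_2(\mathbb{Z}) = \mathrm{SL}_2(\mathbb{Z})$. A direct matrix computation shows $\rho((\sigma_1 \sigma_2)^3) = -I$, while the classical facts that $Z(B_3) = \langle (\sigma_1 \sigma_2)^3 \rangle \cong \mathbb{Z}$, that $B_3 / Z(B_3) \cong \mathrm{PSL}_2(\mathbb{Z}) = \mathrm{SL}_2(\mathbb{Z}) / \{\pm I\}$, and that $\rho$ is surjective (A'Campo) together force $\ker \rho = \langle (\sigma_1 \sigma_2)^6 \rangle$. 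Since $\rho_3$ factors as $\rho$ followed by reduction mod $3$, this yields the central short exact sequence
\[ 1 \longrightarrow \langle (\sigma_1 \sigma_2)^6 \rangle \longrightarrow B_3[3] \longrightarrow \Gamma(3) \longrightarrow 1, \]
where $\Gamma(3) \le \mathrm{SL}_2(\mathbb{Z})$ is the level-$3$ principal congruence subgroup.

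The second step is to establish $\Gamma(3) \cong F_3$. Because $-I \notin \Gamma(3)$, the projection $\mathrm{SL}_2(\mathbb{Z}) \to \mathrm{PSL}_2(\mathbb{Z})$ embeds $\Gamma(3)$ as a torsion-free subgroup of index $12$ in $\mathrm{PSL}_2(\mathbb{Z}) \cong \mathbb{Z}/2 * \mathbb{Z}/3$. By the Kurosh subgroup theorem this image is free, and multiplicativity of Euler characteristic gives $\chi(\Gamma(3)) = 12 \cdot (-1/6) = -2$, forcing the rank to be $3$.

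Finally, since $F_3$ has cohomological dimension $1$ we have $H^2(F_3; \mathbb{Z}) = 0$, so the central extension above splits and $B_3[3] \cong F_3 \times \mathbb{Z}$. A 4-element generating set then consists of $(\sigma_1 \sigma_2)^6$ together with any three elements of $B_3[3]$ mapping to free generators of $\Gamma(3)$; such lifts exist because $B_3[3]$ is the full preimage of $\Gamma(3)$ under $\rho$. The main obstacle I expect is the clean identification $\ker \rho = \langle (\sigma_1 \sigma_2)^6 \rangle$: although each ingredient is classical, the assembly requires a careful comparison of the centers of $B_3$, $\mathrm{SL}_2(\mathbb{Z})$, and $\mathrm{PSL}_2(\mathbb{Z})$, after which the rest of the argument is a routine application of Kurosh and the vanishing of $H^2$ for free groups.
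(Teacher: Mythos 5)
Your argument is correct, but it takes a genuinely different route from the paper. The paper works entirely inside $B_3$ by word manipulation: it exhibits the explicit set $S=\{T^3_{c_1},\,T^3_{c_2},\,T_{c_2}T^3_{c_1}T^{-1}_{c_2},\,T^2_{c_2}T^3_{c_1}T^{-2}_{c_2}\}$, verifies that conjugating each element of $S$ by $T_{c_1}^{\pm1}$ and $T_{c_2}^{\pm1}$ stays in the subgroup $\Gamma=\langle S\rangle$, so that $\Gamma$ is normal in $B_3$, and then concludes $\Gamma=B_3[3]$ because by Theorem \ref{congen} the group $B_3[3]$ is the normal closure of $T^3_{c_1}$ and every element of $S$ is a conjugate of $T^3_{c_1}$. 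Your route instead pins down the isomorphism type: $\ker\rho=\langle(\sigma_1\sigma_2)^6\rangle$ is central, $\Gamma(3)\cong F_3$, and the central extension splits because $H^2(F_3;\mathbb{Z})=0$, giving $B_3[3]\cong F_3\times\mathbb{Z}$. Each step you flag is indeed classical and checks out: the comparison of the two $\mathbb{Z}/2$-central extensions of $\mathrm{PSL}_2(\mathbb{Z})$ (via the five lemma) settles $\ker\rho$; torsion-freeness of $\Gamma(3)$ plus Kurosh plus $\chi=12\cdot(-1/6)=-2$ gives the free group of rank $3$. Your approach buys strictly more than the statement asks: it determines $B_3[3]$ up to isomorphism and shows that four is the \emph{minimal} number of generators, since $H_1(F_3\times\mathbb{Z})\cong\mathbb{Z}^4$. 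What it gives up is the explicit generating set inside $B_3$, which the paper's computation produces and which is the form of the result that is actually usable downstream. Two small remarks: your opening appeal to Theorem \ref{congen} is decorative rather than necessary, since $B_3[3]=\rho^{-1}(\Gamma(3))$ holds directly from the definition of $\rho_3$ once $\rho$ is known to surject onto $\mathrm{SL}_2(\mathbb{Z})$; and your method is just as specific to $n=3$ as the paper's, since for larger $n$ there is no free quotient to exploit.
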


\begin{proof}
Set $S = \{ T^3_{c_1},T^3_{c_2}, T_{c_2} T^3_{c_1} T^{-1}_{c_2}, T^2_{c_2} T^3_{c_1} T^{-2}_{c_2} \}.$ We denote by $\Gamma$ the subgroup of $B_3[3]$ generated by $S$. We prove that if we conjugate elements of $S$ by $T_{c_1}$ or $T_{c_2}$, then the resulting elements lie in $\Gamma$. Since $B_3[3]$ is normally generated by $S$ and since $S$ generates a normal subgroup of $B_3$, then $\Gamma = B_3[3]$.\\

In the braid group we have the relation $$T_{c_j} T_{c_{j-1}}...T^3_{c_i}...T^{-1}_{c_{j-1}} T^{-1}_{c_j} =  T^{-1}_{c_i} T^{-1}_{c_{i+1}}...T^3_{c_j}...T_{c_{i+1}} T_{c_i} $$
We prove the theorem in three steps.

\paragraph{Step 1:} Conjugates of $T^3_{c_1},T^3_{c_2}$:\\
\begin{align*}
T^{- 1}_{c_2} T^3_{c_1} T_{c_2} = T^{-3}_{c_2} T^{2}_{c_2} T^3_{c_1} T^{-2}_{c_2} T^{3}_{c_2} \in \Gamma\\
T^{-1}_{c_1} T^3_2 T_{c_1} = T_2 T^3_{c_1} T^{-1}_2 \in \Gamma\\
 T_{c_1} T^3_{c_2} T^{-1}_{c_1} =  T^{- 1}_{c_2} T^3_{c_1} T_{c_2} = T^{-3}_{c_2} T^{2}_{c_2} T^3_{c_1} T^{-2}_{c_2} T^{3}_{c_2} \in \Gamma.
\end{align*}
\paragraph{Step 2:} Conjugates of $T_{c_2} T^3_{c_1} T^{-1}_{c_2}$:
\begin{align*}
T_{c_1} T_{c_2} T^3_{c_1} T^{-1}_{c_2} T^{-1}_{c_1} = T^3_{c_2} \in \Gamma\\
T^{-1}_{c_1} T_{c_2} T^3_{c_1} T^{-1}_{c_2} T_{c_1} = T^{-2}_{c_1} T^{3}_{c_2} T^{2}_{c_1} = T^{-3}_{c_1} (T_{c_1} T^3_{c_2} T^{-1}_{c_1}) T^3_{c_1}.
\end{align*}
The latter is in $\Gamma$ by step 1.

\paragraph{Step 3:} Conjugates of $T^2_{c_2} T^3_{c_1} T^{-2}_{c_2}$:
\begin{align*}
 T^{-1}_{c_1} T^2_{c_2} T^3_{c_1} T^{-2}_{c_2} T_{c_1} = T^{-1}_{c_1} T^3_{c_2} T^{-1}_{c_2} T^3_{c_1} T_{c_2} T^{-3}_{c_2} T_{c_1} = \\
(T^{-1}_{c_1} T^3_{c_2} T_{c_1}) (T^{-1}_{c_1} T^{-1}_{c_2} T^3_{c_1} T_{c_2} T_{c_1}) (T^{-1}_1 T^{-3}_{c_2} T_{c_1}) 
\end{align*}
The elements $(T^{-1}_{c_1} T^3_{c_2} T_{c_1}), (T^{-1}_{c_1} T^{-3}_{c_2} T_{c_1})$ are in $\Gamma$ by step 1.
\begin{align*}
T^{-1}_{c_1} T^{-1}_{c_2} T^3_{c_1} T_{c_2} T_{c_1} = T^3_{c_2}
\end{align*}
Finally, since $T^2_{c_2} T^3_{c_1} T^{-2}_{c_2} = T^3_{c_2} T^{-1}_{c_2} T^3_{c_1} T_{c_2} T^{-3}_{c_2}$, it suffices to check that $T_{c_1} T^{-1}_{c_2} T^3_{c_1} T_{c_2} T^{-1}_{c_1}$ is in $\Gamma$. But we have that
\begin{align*}
T_{c_1} T^{-1}_{c_2} T^3_{c_1} T_{c_2} T^{-1}_{c_1} = T^2_{c_1} T^3_{c_2} T^{-2}_{c_1} = T^3_{c_1} T^{-1}_{c_1} T^3_2 T_{c_1} T^{-3}_{c_1}=T^3_{c_1} T_{c_2} T^3_{c_1} T^{-1}_{c_2} T^{-3}_{c_1} \in \Gamma.
\end{align*}
This proves the theorem.
\end{proof}

Since $T^2_{c_2} T^3_{c_1} T^{-2}_{c_2} = T^3_{c_2} T^{-1}_{c_2} T^3_{c_1} T_{c_2} T^{-3}_{c_2}$ we deduce that $\{ T^3_{c_1},T^3_{c_2}, T_{c_2} T^3_{c_1} T^{-1}_{c_2}, T^{-1}_{c_2} T^3_{c_1} T_{c_2} \}$ is also a generating set for $B_3[3]$.

\section{Symplectic groups and pure braid groups}

For $i \in \mathbb{N}$, let $p_i$ denote a prime number greater than 2. In this section we characterize $B_{2g+b}[m]$, where $m=2p_1 p_2...p_k$ and $m=4p_1 p_2...p_k$. Our strategy is to find a presentation for $PB_{2g+b}/B_{2g+b}[m]$. We recall that $\mathrm{H}_1(PB_{2g+b},\mathbb{Z}/2)$ is $\mathfrak{sp}_{2g}(\mathbb{Z}/2)$, if $b=1$ and $\mathrm{Ann}(y_{g+1})$ if $b=2$, where $\mathrm{Ann}(y_{g+1}) = \{ h \in \mathfrak{sp}_{2g+2}(\mathbb{Z}/2) \mid h(y_{g+1})=0 \}$ \cite{BM}. The generators of $B_{2g+b}$ are denoted by $\sigma_i$ and the generators of $PB_{2g+b}$ are denoted by $a_{i,j}$ as in Section 2.

\begin{theorem}
	For $m=2p_1 p_2...p_k$, where $p_i\geq3$ are prime numbers, we have
	
	\[
	PB_{2g+b}/B_{2g+b}[m] = 
	\begin{cases} 
	\hfill  \bigoplus^k_{i=1} \mathrm{Sp}_{2g}(\mathbb{Z}/p_i)    \hfill & \text{ if $b=1$}, \\
	\hfill  \bigoplus^k_{i=1} (\mathrm{Sp}_{2g+2}(\mathbb{Z}/p_i))_{y_{g+1}} \hfill & \text{ if $b=2$}.\\
	\end{cases}
	\]
	\label{PBQ1}
\end{theorem}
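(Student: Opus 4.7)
The strategy is to construct the natural product map $\Phi : PB_{2g+b}\to\bigoplus_{i=1}^k \mathrm{Sp}_?(\mathbb{Z}/p_i)$, where $\mathrm{Sp}_?(\mathbb{Z}/p_i)$ abbreviates the appropriate symplectic group from the statement ($\mathrm{Sp}_{2g}(\mathbb{Z}/p_i)$ when $b=1$, and $(\mathrm{Sp}_{2g+2}(\mathbb{Z}/p_i))_{y_{g+1}}$ when $b=2$), identify its kernel as $B_{2g+b}[m]$, and then prove surjectivity by realizing each coordinate independently.

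For the setup, since $m$ is even, Arnold's theorem $B_n[2]=PB_n$ gives $B_{2g+b}[m]\subseteq PB_{2g+b}$, so the quotient is well defined. The Newman--Smart decomposition (Proposition~\ref{newman}) gives $\mathrm{Sp}_?(\mathbb{Z}/m)\cong \mathrm{Sp}_?(\mathbb{Z}/2)\oplus \bigoplus_i \mathrm{Sp}_?(\mathbb{Z}/p_i)$, under which $\rho_m$ factors as $(\rho_2,\rho_{p_1},\dots,\rho_{p_k})$; consequently $B_{2g+b}[m] = PB_{2g+b}\cap \bigcap_i B_{2g+b}[p_i]$. Setting $\Phi(\gamma) = (\rho_{p_1}(\gamma),\ldots,\rho_{p_k}(\gamma))$ yields $\ker\Phi = B_{2g+b}[m]$, hence an injection $\bar\Phi : PB_{2g+b}/B_{2g+b}[m]\hookrightarrow\bigoplus_i \mathrm{Sp}_?(\mathbb{Z}/p_i)$.

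For surjectivity, let $M_i := m/p_i = 2\prod_{j\ne i}p_j$. It suffices to show $\rho_{p_i} : B_{2g+b}[M_i]\to\mathrm{Sp}_?(\mathbb{Z}/p_i)$ is surjective for each $i$: given this, any target $(A_1,\ldots,A_k)$ is realized by choosing $\gamma_i\in B_{2g+b}[M_i]$ with $\rho_{p_i}(\gamma_i)=A_i$ and forming $\gamma = \gamma_1\cdots\gamma_k$, since $B_{2g+b}[M_i]\subseteq B_{2g+b}[p_j]$ for $j\ne i$ forces the off-diagonal components to be trivial.

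The main step, and the main obstacle, is this coordinatewise surjectivity. By A'Campo, $\mathrm{Sp}_?(\mathbb{Z}/p_i)$ is generated by the transvections $T_{[c_j]}=\rho_{p_i}(T_{c_j})$, so it is enough to exhibit each $T_{[c_j]}$ in $\rho_{p_i}(B_{2g+b}[M_i])$. The Dehn twist power $T_{c_j}^{M_i}$ lies in $B_{2g+b}[M_i]$, because $\rho(T_{c_j}^{M_i})=T_{[c_j]}^{M_i}$ sends any homology class $x$ to $x+M_i\hat{i}(x,[c_j])[c_j]\equiv x\pmod{M_i}$; under $\rho_{p_i}$ it maps to $T_{[c_j]}^{M_i}$. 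Since $T_{[c_j]}$ has order exactly $p_i$ in $\mathrm{Sp}_?(\mathbb{Z}/p_i)$ and $\gcd(M_i,p_i)=1$, the power $T_{[c_j]}^{M_i}$ generates the same cyclic subgroup as $T_{[c_j]}$, so $T_{[c_j]}$ itself lies in the image. Hence $\rho_{p_i}(B_{2g+b}[M_i])$ contains every generator of $\mathrm{Sp}_?(\mathbb{Z}/p_i)$ and equals the whole group, completing the proof.
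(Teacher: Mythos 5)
Your proof is correct, and the surjectivity step is handled by a genuinely different mechanism than the paper's. The paper restricts $\rho$ to $PB_{2g+b}$, invokes the fact that its image in $\mathrm{Sp}_{2g}(\mathbb{Z})$ (resp.\ $(\mathrm{Sp}_{2g+2}(\mathbb{Z}))_{y_{g+1}}$) is the level-2 congruence subgroup $\mathrm{Sp}_{2g}(\mathbb{Z})[2]$, and then applies Lemma \ref{Ha} --- the exact sequence $1 \to \mathrm{Sp}_{2n}(\mathbb{Z})[2p] \to \mathrm{Sp}_{2n}(\mathbb{Z})[2] \to \mathrm{Sp}_{2n}(\mathbb{Z}/p) \to 1$, itself proved via the Church--Putman generating set --- to see that the mod-2 component dies and each mod-$p_i$ component is hit. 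You instead stay entirely inside the braid group: you get each factor $\mathrm{Sp}_?(\mathbb{Z}/p_i)$ from the images of the Dehn twist powers $T_{c_j}^{M_i} \in B_{2g+b}[M_i]$ together with the coprimality of $M_i$ and $p_i$ and the fact that a nontrivial transvection has order $p_i$ mod $p_i$; this needs only A'Campo's surjectivity of $\rho_{p_i}$ and sidesteps both Lemma \ref{Ha} and the identification of the image of $PB_{2g+b}$ with $\mathrm{Sp}_{2g}(\mathbb{Z})[2]$. Your route is also slightly more careful on one point the paper leaves implicit: surjectivity onto the direct sum (not merely onto each factor separately), which you get by realizing the $i$-th coordinate by elements of $B_{2g+b}[M_i]$ that are trivial in all other coordinates. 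The trade-off is that the paper's symplectic-group machinery is reused for Theorem \ref{PBQ2} (the $m=4p_1\cdots p_k$ case), where one must compute the image mod 4 and your twist-power trick alone would not suffice. Two shared caveats, not defects of your argument relative to the paper's: both proofs implicitly assume the $p_i$ are pairwise distinct (otherwise Newman--Smart produces prime-power factors), and both pass the Newman--Smart decomposition to the stabilizer subgroups $(\cdot)_{y_{g+1}}$ in the $b=2$ case without comment.
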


\begin{proof}
	
	We set $m=2p_1 p_2 ... p_k$. We have the map
	\[
	\rho_m: B_{2g+b} \rightarrow 
	\begin{cases} 
	\hfill \mathrm{Sp}_{2g}(\mathbb{Z}) \rightarrow \mathrm{Sp}_{2g}(\mathbb{Z}/m)    \hfill & \text{ if $b=1$}, \\
	\hfill (\mathrm{Sp}_{2g+2}(\mathbb{Z}))_{y_{g+1}} \rightarrow (\mathrm{Sp}_{2g+2}(\mathbb{Z}/m))_{y_{g+1}} \hfill & \text{ if $b=2$}\\
	\end{cases}
	\]
	with kernel $B_{2g+b}[m]$. By Lemma \ref{newman} we know that 
	\[ \mathrm{Sp}_{2g}(\mathbb{Z}/m)  = \mathrm{Sp}_{2g}(\mathbb{Z}/2) \bigoplus^{k}_{i=1} \mathrm{Sp}_{2g}(\mathbb{Z}/p_i). \]
	If we restrict to the pure braid group, then the image of the map $PB_{2g+1} \rightarrow \mathrm{Sp}_{2g}(\mathbb{Z})$ is the group $\mathrm{Sp}_{2g}(\mathbb{Z})[2]$, (see \cite[Theorem 3.3]{BM}). Furthermore, by Lemma \ref{Ha} we have that the map $\mathrm{Sp}_{2g}(\mathbb{Z})[2] \rightarrow \mathrm{Sp}(\mathbb{Z}/p_i)$ is surjective. Thus, the image of the map
	$$\mathrm{Sp}_{2g}(\mathbb{Z}) \rightarrow \mathrm{Sp}_{2g}(\mathbb{Z}/m) = \mathrm{Sp}_{2g}(\mathbb{Z}/2) \bigoplus^{k}_{i=1} \mathrm{Sp}_{2g}(\mathbb{Z}/p_i),$$
	after we restrict to $\mathrm{Sp}_{2g}(\mathbb{Z})[2]$, is the group $\bigoplus^{k}_{i=1} \mathrm{Sp}_{2g}(\mathbb{Z}/p_i)$. Hence, have a short exact sequence
	\[ 1 \rightarrow B_{2g+1}[m] \rightarrow PB_{2g+1} \rightarrow  \bigoplus^{k}_{i=1} \mathrm{Sp}_{2g}(\mathbb{Z}/p_i) \rightarrow 1.\]
	Likewise, since the image of the map $PB_{2g+2} \rightarrow (\mathrm{Sp}_{2g+2}(\mathbb{Z}))_{y_{g+1}}$ is $(\mathrm{Sp}_{2g+2}(\mathbb{Z})[2])_{y_{g+1}}$ (see \cite[Theorem 3.3]{BM}), and since $(\mathrm{Sp}_{2g+2}(\mathbb{Z}/m))_{y_{g+1}} <\mathrm{Sp}_{2g+2}(\mathbb{Z}/m)$, we can apply Lemma \ref{newman} and end up with the following exact sequence.
	\[ 1 \rightarrow B_{2g+2}[m] \rightarrow PB_{2g+2} \rightarrow  \bigoplus^k_{i=1} (\mathrm{Sp}_{2g+2}(\mathbb{Z}/p_i))_{y_{g+1}} \rightarrow 1.\]\\
	This completes the proof.
\end{proof}

In the following statement we slightly generalize Lemma \ref{PBQ1}. The symplectic Lie algebra $\mathfrak{sp}_{2n}(\mathbb{Z})$ consists of those elements $A \in \mathfrak{gl}_{2n}(\mathbb{Z})$ which satisfy the relation $A^T J + J A = 0$. We define also
\[ \mathrm{Ann}(u) = \{ m \in \mathfrak{sp}_{2n}(\mathbb{Z})\mid m(u)=0 \}, \]
where $\mathrm{Ann}(u)$ stands for the annihilator of the vector $u$. We have the following theorem.

\begin{theorem}
	For $m=4p_1 p_2...p_k$, where $p_i\geq3$ are prime numbers , we have
	
	\[
	PB_{2g+b}/B_{2g+b}[m] = 
	\begin{cases} 
	\hfill \mathfrak{sp}_{2g}(\mathbb{Z}/2) \bigoplus^k_{i=1} \mathrm{Sp}_{2g}(\mathbb{Z}/p_i)    \hfill & \text{ if $b=1$}, \\
	\hfill \mathrm{Ann}(e) \bigoplus^k_{i=1} (\mathrm{Sp}_{2g+2}(\mathbb{Z}/p_i))_{y_{g+1}} \hfill & \text{ if $b=2$}.\\
	\end{cases}
	\]
	\label{PBQ2}
\end{theorem}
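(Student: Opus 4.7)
The plan is to run the argument of Theorem \ref{PBQ1} essentially verbatim, with the sole change that the extra factor of $2$ in $m$ (compared to $2p_1\cdots p_k$) gets absorbed into a Lie-algebra summand via Proposition \ref{comutsympl}. Write $m = 4\cdot p_1\cdots p_k$. By Proposition \ref{newman} the target splits as
\[
\mathrm{Sp}_{2g}(\mathbb{Z}/m)\;\cong\;\mathrm{Sp}_{2g}(\mathbb{Z}/4)\;\oplus\;\bigoplus_{i=1}^{k}\mathrm{Sp}_{2g}(\mathbb{Z}/p_i),
\]
and, as in the proof of Theorem \ref{PBQ1}, \cite[Theorem 3.3]{BM} identifies the image of $PB_{2g+b}$ in the integral symplectic group with $\mathrm{Sp}_{2g}(\mathbb{Z})[2]$ (respectively $(\mathrm{Sp}_{2g+2}(\mathbb{Z})[2])_{y_{g+1}}$ when $b=2$). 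It thus remains to compute the image of $\mathrm{Sp}_{2g}(\mathbb{Z})[2]$ under the mod-$m$ reduction, factor by factor.

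In the $\mathrm{Sp}_{2g}(\mathbb{Z}/p_i)$ summand the reduction is surjective by Lemma \ref{Ha} applied with $a=2$ and $b=p_i$. In the $\mathrm{Sp}_{2g}(\mathbb{Z}/4)$ summand the image is precisely $\mathrm{Sp}_{2g}(\mathbb{Z})[2]/\mathrm{Sp}_{2g}(\mathbb{Z})[4]$, which by Proposition \ref{comutsympl} (with $l=m=2$) is identified with $\mathfrak{sp}_{2g}(\mathbb{Z}/2)$, embedded in $\mathrm{Sp}_{2g}(\mathbb{Z}/4)$ via $A\mapsto I+2A$. To upgrade these coordinatewise surjections to a surjection onto the full direct sum, I would lift: given $(a_0, a_1,\dots,a_k)\in\mathfrak{sp}_{2g}(\mathbb{Z}/2)\oplus\bigoplus\mathrm{Sp}_{2g}(\mathbb{Z}/p_i)$, choose $A_0\in\mathrm{Sp}_{2g}(\mathbb{Z})[2]$ reducing to $a_0$ mod $4$ and $B_i\in\mathrm{Sp}_{2g}(\mathbb{Z})[2]$ reducing to $a_i$ mod $p_i$, then invoke surjectivity of $\mathrm{Sp}_{2g}(\mathbb{Z})\to\mathrm{Sp}_{2g}(\mathbb{Z}/m)$ (Proposition \ref{newman} applied integrally) to find $C\in\mathrm{Sp}_{2g}(\mathbb{Z})$ congruent to $A_0$ mod $4$ and to each $B_i$ mod $p_i$. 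Since $C\equiv I\pmod 2$ in each factor, $C\in\mathrm{Sp}_{2g}(\mathbb{Z})[2]$, and it maps to $(a_0,\dots,a_k)$. This gives the short exact sequence
\[
1\to B_{2g+1}[m]\to PB_{2g+1}\to\mathfrak{sp}_{2g}(\mathbb{Z}/2)\oplus\bigoplus_{i=1}^k \mathrm{Sp}_{2g}(\mathbb{Z}/p_i)\to 1,
\]
yielding the theorem when $b=1$.

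For $b=2$ I would rerun the same argument inside the stabilizer of $y_{g+1}$. The one point that requires extra care, and which I expect to be the main obstacle, is the stabilized analogue of Proposition \ref{comutsympl}, namely the identification
\[
(\mathrm{Sp}_{2g+2}(\mathbb{Z})[2])_{y_{g+1}}\big/(\mathrm{Sp}_{2g+2}(\mathbb{Z})[4])_{y_{g+1}}\;\cong\;\mathrm{Ann}(y_{g+1}),
\]
where the annihilator is taken inside $\mathfrak{sp}_{2g+2}(\mathbb{Z}/2)$. Under the map $A=I+2B\mapsto B\bmod 2$, the equation $A(y_{g+1})=y_{g+1}$ becomes $B(y_{g+1})=0$ in $\mathbb{Z}^{2g+2}$, so $B\bmod 2\in\mathrm{Ann}(y_{g+1})$; conversely, given $\bar B\in\mathrm{Ann}(y_{g+1})$ one lifts to an integral $B\in\mathfrak{sp}_{2g+2}(\mathbb{Z})$ satisfying $B(y_{g+1})=0$ by correcting any initial lift by an element of $2\,\mathfrak{sp}_{2g+2}(\mathbb{Z})$ sending $y_{g+1}$ to $-\tfrac{1}{2}B(y_{g+1})$, which is possible because $B(y_{g+1})$ is already even. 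Granted this identification, the rest of the argument (splitting via Proposition \ref{newman}, surjectivity onto each $p_i$-factor via Lemma \ref{Ha}, and Chinese-remainder assembly) carries over unchanged, producing the stated direct-sum decomposition of $PB_{2g+2}/B_{2g+2}[m]$.
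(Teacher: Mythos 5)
Your proposal is correct and follows essentially the same route as the paper: decompose $\mathrm{Sp}_{2g}(\mathbb{Z}/m)$ via Proposition \ref{newman}, identify the image of $PB_{2g+b}$ with $\mathrm{Sp}_{2g}(\mathbb{Z})[2]$ (resp.\ its stabilized version), and recognize its image in the mod-$4$ factor as $\mathfrak{sp}_{2g}(\mathbb{Z}/2)$ via Proposition \ref{comutsympl}, with Lemma \ref{Ha} handling the odd-prime factors. The only differences are that you spell out the Chinese-remainder assembly, which the paper leaves implicit, and that for $b=2$ you sketch the stabilized quotient identification directly where the paper simply cites \cite[Lemma 3.5]{BM}.
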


\begin{proof}
	We set $m=4p_1 p_2 ... p_k$. By Lemma \ref{newman} we have that 
	\[ \mathrm{Sp}_{2g}(\mathbb{Z}/m)  = \mathrm{Sp}_{2g}(\mathbb{Z}/4) \bigoplus^{k}_{i=1} \mathrm{Sp}_{2g}(\mathbb{Z}/p_i). \]
	We want to characterize the image of the map
	\[
	B_{2g+b} \rightarrow 
	\begin{cases} 
	\hfill \mathrm{Sp}_{2g}(\mathbb{Z}/4) \bigoplus^k_{i=1} \mathrm{Sp}_{2g}(\mathbb{Z}/p_i)    \hfill & \text{ if $b=1$}, \\
	\hfill (\mathrm{Sp}_{2g+2}(\mathbb{Z}/4))_{y_{g+1}} \bigoplus^k_{i=1} (\mathrm{Sp}_{2g+2}(\mathbb{Z}/p_i))_{y_{g+1}} \hfill & \text{ if $b=2$}.\\
	\end{cases}
	\]
	
	For $b=1$ we only need to characterize the image of the restriction of the map above to $PB_{2g+b}$. In particular, we want to compute the image of the map $PB_{2g+1} \rightarrow \mathrm{Sp}_{2g}(\mathbb{Z}/4)$. We know that the image of the map $PB_{2g+1} \rightarrow \mathrm{Sp}_{2g}(\mathbb{Z})$ is  $\mathrm{Sp}_{2g}(\mathbb{Z})[2]$. Consider the inclusion
	\[ \mathrm{Sp}_{2g}(\mathbb{Z})[2] \hookrightarrow \mathrm{Sp}_{2g}(\mathbb{Z}). \]
	We quotient the above inclusion by $\mathrm{Sp}_{2g}(\mathbb{Z})[4]$, and we get the following inclusion:
	\[ \mathfrak{sp}_{2g}(\mathbb{Z}/2) \hookrightarrow \mathrm{Sp}_{2g}(\mathbb{Z}/4). \]
	We finally have
	\[ PB_{2g+1} \rightarrow \mathrm{Sp}_{2g}(\mathbb{Z})[2] \rightarrow  \mathfrak{sp}_{2g}(\mathbb{Z}/2) < \mathrm{Sp}_{2g}(\mathbb{Z}/4). \]
	
	Hence, the image of the map $PB_{2g+1} \rightarrow \mathrm{Sp}_{2g}(\mathbb{Z}/4)$ is the abelian group $\mathfrak{sp}_{2g}(\mathbb{Z}/2)$. Thus, we have
	\[ PB_{2g+b}/B_{2g+b}[m] \cong \mathfrak{sp}_{2g}(\mathbb{Z}/2) \bigoplus^k_{i=1} \mathrm{Sp}_{2g}(\mathbb{Z}/p_i). \]
	
	For $b=2$, the maps
	\[ PB_{2g+2} \rightarrow (\mathrm{Sp}_{2g+2}(\mathbb{Z})[2])_{y_{g+1}} \rightarrow \mathrm{Ann}(y_{g+1}) \]
	are both surjective, \cite[Lemma 3.5]{BM}. But $\mathrm{Ann}(y_{g+1}) < (\mathrm{Sp}_{2g+2}(\mathbb{Z}/4))_{y_{g+1}}$, and thus, the image of the map 
	\[ PB_{2g+2} \rightarrow (\mathrm{Sp}_{2g+2}(\mathbb{Z}/4))_{y_{g+1}} \]
	is the group $\mathrm{Ann}(y_{g+1})$. Thus, we get
	\[ PB_{2g+2} / B_{2g+2}[m] \cong \mathrm{Ann}(y_{g+1}) \bigoplus^k_{i=1} (\mathrm{Sp}_{2g+2}(\mathbb{Z}/p_i))_{y_{g+1}}.  \]
	This completes the proof.
\end{proof}

In order to find generators for $B_{2g+1}[m]$, it suffices to find a presentation for $\mathrm{Sp}_{2g}(\mathbb{Z}/p)$ in terms of pure braids. In the next proposition we prove that $\mathrm{Sp}_{2g}(\mathbb{Z}/p)$ admits a presentation as a quotient of the pure braid group over some relations. These new relations are the generators for $B_{2g+1}[2p]$. Recall that the generators of $PB_n$ are defined to be $a_{i,j} = \sigma_{j-1}...\sigma_{i+1} \sigma^2_i \sigma^{-1}_{i+1}...\sigma^{-1}_{j-1}$, where $1 \leq i < j \leq n$.

\begin{proposition}
	Fix a prime number $p$, and put $p=2k+1$. Let $H_n$ be the group with generators $ \{ a_{i,j} \}$ with defining relations as follows:
	\label{SYPRE}
	\begin{enumerate}
		\item[PR1.] $\begin{aligned}[t]
		a^{k}_{i,i+1} a^{k}_{i+1,i+2} a^{k}_{i,i+1}= a^{k}_{i+1,i+2} a^{k}_{i,i+1} a^{k}_{i+1,i+2},
		\end{aligned}$
		
		\item[PR2.] $\begin{aligned}[t]
		a^p_{i,j} = 1,
		\end{aligned}$
		
		\item[PR3.] $\begin{aligned}[t]
		(a_{1,2} a_{1,3} a_{2,3})^2=1 \: \mathrm{for} \, p>3,
		\end{aligned}$
		
		\item[PR4.] $\begin{aligned}[t]
		a^{-1}_{r,s} a_{i,j} a_{r,s}  = a_{i,j}, \: 1 \leq r<s<i<j \leq n \: \mathrm{or} \: 1 \leq i<r<s<j \leq n,
		\end{aligned}$
		
		\item[PR5.] $\begin{aligned}[t]
		a^{-1}_{r,s} a_{i,j} a_{r,s}  = a_{r,j} a_{i,j} a^{-1}_{r,j} , \: 1 \leq r<s=i<j \leq n,
		\end{aligned}$
		
		\item[PR6.] $\begin{aligned}[t]
		a^{-1}_{r,s} a_{i,j} a_{r,s}  = (a_{i,j} a_{s,j})a_{i,j}(a_{i,j} a_{s,j})^{-1} , \: 1 \leq r=i<s<j \leq n,
		\end{aligned}$
		
		\item[PR7.] $\begin{aligned}[t]
		a^{-1}_{r,s} a_{i,j} a_{r,s}  = (a_{r,j} a_{s,j} a^{-1}_{r,j} a^{-1}_{s,j})a_{i,j}(a_{r,j} a_{s,j} a^{-1}_{r,j} a^{-1}_{s,j})^{-1} , \: 1 \leq r<i<s<j \leq n,
		\end{aligned}$
		
		\item[PR8.] $\begin{aligned}[t]
		a_{i,j} = a^{k+1}_{j-1,j} a^{k+1}_{j-2,j-1}...a_{i,i+1} a^k_{i+1,i+2}...a^k_{j-1,j}, \: 1 < |i-j| \leq n,
		\end{aligned}$
		
		\item[PR9.] $\begin{aligned}[t]
		a_{1,2} a_{1,3} a_{2,3} = C, \mathrm{where}
		\end{aligned}$
		\subitem $C = (a^{(p+1)/4}_{1,2} a^2_{2,3})^2$, if $(p+1)/2$ is even,
		\subitem $C = a^{(p+3)/4}_{1,2} a^2_{1,3} a^{(p-1)/4}_{1,2} a^2_{2,3}$, if $(p+1)/2$ is odd.
		
		\item[PR10.] $\begin{aligned}[t]
		a_{1,2} a_{1,3} a_{1,4} a_{2,3} a_{2,4} a_{3,4} = B a_{1,4} B^{-1}, \mathrm{where}
		\end{aligned}$
		\subitem $B = a_{3,5} a_{4,5} a^{k/2}_{2,3} a^{-1}_{3,4}$, if $k$ is even,
		\subitem $B = a_{3,5} a_{4,5} a^{k+1}_{2,3} a_{3,4}$, if $k$ is odd.		
	\end{enumerate}
	
	\begin{flushleft}
		If $n = 2g+1$ then $H_{n}$ is isomorphic to $\mathrm{Sp}_{2g}(\mathbb{Z}/p)$. On the other hand if $n= 2g+2$, then $H_{n}$ is isomorphic to $\mathrm{Sp}_{2g+2}(\mathbb{Z}/p)_{y_{g+1}}$.
	\end{flushleft}
	
\end{proposition}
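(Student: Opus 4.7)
The plan is to identify $H_n$ with Wajnryb's presentation $G_{2g+b}$ of Theorem \ref{WA} (which is already known to be $\mathrm{Sp}_{2g}(\mathbb{Z}/p)$, respectively the stabilizer version) by constructing mutually inverse surjective homomorphisms. The bridge between the two presentations is the identity $\sigma_i = \sigma_i^{p+1} = (\sigma_i^2)^{k+1} = a_{i,i+1}^{k+1}$, valid in any quotient of $B_n$ where $\sigma_i^p = 1$, since $p = 2k+1$.

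First I would define $\phi : G_{2g+b} \to H_n$ by $T_{c_i} \mapsto a_{i,i+1}^{k+1}$ and verify each Wajnryb relation inside $H_n$. Relation R1 becomes PR1 after substitution. R2 is the commutativity of $a_{i,i+1}^{k+1}$ with $a_{j,j+1}^{k+1}$ for $|i-j|>1$, immediate from PR4. R3 becomes $a_{1,2}^{p(k+1)}=1$, from PR2. R4 is a consequence of PR3 combined with PR1 and PR2. Relation R5, after the reformulation of Lemma \ref{insert} as $(T_{c_1}^{(p+1)/2}T_{c_2}^4)^2=(T_{c_1}T_{c_2})^3$, corresponds to PR9 under the substitution $T_{c_i}\mapsto a_{i,i+1}^{k+1}$: the case split in PR9 on the parity of $(p+1)/2=k+1$ is exactly the distinction between $\sigma_1^{(p+1)/2}$ being an integer power of $a_{1,2}$ or carrying a leftover factor $\sigma_1 = a_{1,2}^{k+1}$ that must be absorbed via PR5--PR7 and PR8, producing the term $a_{1,3}^2$ in the odd case. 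Finally R6 corresponds to PR10 via Lemma \ref{rel6} and the chain relation, with an analogous parity split on $k$ dictating the form of the conjugator $B$.

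In the opposite direction I would define $\psi : H_n \to G_{2g+b}$ by $a_{i,j}\mapsto T_{c_{j-1}}\cdots T_{c_{i+1}}T_{c_i}^2 T_{c_{i+1}}^{-1}\cdots T_{c_{j-1}}^{-1}$. Relations PR4--PR7 are the pure-braid relations of $PB_n$ and hold automatically. PR2 follows from R3 since every $a_{i,j}$ is $B_n$-conjugate to $\sigma_i^2$ and $(T_{c_i}^p)^2=1$. PR8 is the canonical pure-braid expression for $a_{i,j}$ combined with the identity $\sigma_i = a_{i,i+1}^{k+1}$ (which holds in $G_{2g+b}$ by R3). PR1 reduces to the braid relation R1 together with PR2. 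Finally PR3, PR9, PR10 are the rewritings of R4, R5 (via Lemma \ref{insert}), R6 (via Lemma \ref{rel6}) read from the other direction. Both maps are surjective: from PR2 one has $\phi(T_{c_i}^2) = a_{i,i+1}^{2(k+1)} = a_{i,i+1}^{p+1} = a_{i,i+1}$, and PR8 then recovers all $a_{i,j}$; while $\psi(a_{i,i+1}^{k+1}) = \sigma_i^{p+1} = T_{c_i}$. The composition $\psi\circ\phi$ is the identity on generators, so $\phi$ is an isomorphism.

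The main obstacle I expect is the parity-sensitive bookkeeping in the two matchings PR9 $\leftrightarrow$ R5 and PR10 $\leftrightarrow$ R6. When $(p+1)/2$ is odd (respectively when $k$ is odd) an extra factor of $\sigma_1$ (respectively an odd power of $\sigma_2$ appearing in Wajnryb's element $A$) cannot be folded into a square of a braid generator, and must instead be rewritten using the conjugation relations PR5--PR7 so as to produce the pure-braid generators $a_{1,3}$ and $a_{3,4}, a_{3,5}$ appearing in the stated formulas for $C$ and $B$. Carrying out these two rewritings explicitly, while routine, will be the principal computational content of the proof.
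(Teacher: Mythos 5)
Your proposal is correct and follows essentially the same route as the paper: the paper performs the identification via Tietze transformations on Wajnryb's presentation $G_n$, adding the generators $a_{i,j}$, deriving $\sigma_i = a_{i,i+1}^{k+1}$ from $\sigma_i^p=1$, and matching relations exactly as you do (PR1$\leftrightarrow$R1, PR2$\leftrightarrow$R3, PR3$\leftrightarrow$R4, PR9$\leftrightarrow$R5, PR10$\leftrightarrow$R6, PR8$\leftrightarrow$the definition of $a_{i,j}$). Your formulation as a pair of mutually inverse homomorphisms is the same argument in different packaging, and the parity-sensitive verifications you flag as the main computational content are likewise left implicit in the paper.
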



Note that relations $PR4$, $PR5$, $PR6$, $PR7$ are relations in the presentation of the pure braid group given in Chapter 4. We begin with the group $G_n$ defined in Theorem \ref{WA}, and using Tietze transformations, we obtain the presentation of $H_n$.\\

\begin{proof}
	By Theorem \ref{WA} the group $G_n$ has the following presentation:
	\[ G_n = \langle \sigma_i \vert \: R1,R2,R3,R4,R5,R6 \rangle, \]
	where $1 \leq i < 2g+b$. Let $a_{i,j} = \sigma_{j-1}...\sigma_{i+1} \sigma^2_i \sigma^{-1}_{i+1}...\sigma^{-1}_{j-1}$ and denote this relation by $PR11$. Then include $PR11$ into the presentation of $G_n$ and add the generator $a_{i,j}$ to obtain
	\[ \langle \sigma_i, a_{i,j} \vert \: R1,R2,R3,R4,R5,R6, PR11 \rangle.\]
	Since $PB_n$ is a subgroup of $B_n$, this means that $R1$ and $R2$ can be used to deduce the relations $PR4$, $PR5$, $PR6$, $PR7$.
	\[ \langle \sigma_i, a_{i,j} \vert \: R1,R2,R3,R4,R5,R6,PR4,PR5,PR6,PR7, PR11 \rangle.\]
	The relation $R2$ can be deduced by $PR11$ and $R3$ and $PR4$
	\[ \langle \sigma_i, a_{i,j} \vert \: R1,R3,R4,R5,R6,PR2,PR4,PR5,PR6,PR7, PR11 \rangle.\]
	We derive two more relations from $PR11$ and $R3$.
	\[ \sigma_i = a^{k+1}_{i,i+1}, \quad \sigma^{-1}_{i} = a^k_{i,i+1}. \]
	Then $PR1$ is equivalent to $R1$, $PR2$ is equivalent to $R3$, $PR3$ is equivalent to $R4$, $PR9$ is equivalent to $R5$, $PR10$ is equivalent to $R6$, and $PR11$ is equivalent to $PR8$. In other words,
	\[ \langle \sigma_i, a_{i,j} \vert \: PR1,PR2,PR4,PR5,PR6,PR7,PR8,PR9,PR10,\sigma_i = a^{k+1}_{i,i+1}, \sigma^{-1}_{i} = a^k_{i,i+1} \rangle \]
	Finally, for $1 \leq i < j \geq 2g+b$ we have that
	\[ \langle a_{i,j} \vert \: PR1,PR2,PR4,PR5,PR6,PR7,PR8,PR9,PR10 \rangle, \]
	which is the presentation of $H_n$.
\end{proof}

As an application of Proposition \ref{SYPRE}, we can obtain generators for $B_{2g+b}[2p]$.

\begin{corollary}
	For $k = (p-1)/2$, the group $B_{2g+b}[2p]$ is normally generated by six types of elements:
	\begin{align*}
		a^p_{i,j},\\
		(a_{1,2} a_{1,3} a_{2,3})^2,\\
		a_{1,2} a_{1,3} a_{2,3} C^{-1},\\
		a_{1,2} a_{1,3} a_{1,4} a_{2,3} a_{2,4} a_{3,4} B a^{-1}_{1,4} B^{-1},\\
		a^{k}_{i,i+1} a^{k}_{i+1,i+2} a^{k}_{i,i+1}  a^{-k}_{i+1,i+2} a^{-k}_{i,i+1} a^{-k}_{i+1,i+2},\\
		a^{k+1}_{j-1,j} a^{k+1}_{j-2,j-1}...a_{i,i+1} a^k_{i+1,i+2}...a^k_{j-1,j} a^{-1}_{i,j}.\\
	\end{align*}
\end{corollary}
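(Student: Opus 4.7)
The proof is essentially an assembly of two earlier results: Theorem \ref{PBQ1} (which identifies $PB_{2g+b}/B_{2g+b}[2p]$ with the symplectic quotient, taking $k=1$, $p_1=p$ in the statement $m=2p_1\cdots p_k$), and Proposition \ref{SYPRE} (which presents this symplectic quotient using the pure braid generators $a_{i,j}$ together with relations $PR1$--$PR10$). I would first invoke Theorem \ref{PBQ1} to obtain the short exact sequence
\[ 1 \to B_{2g+b}[2p] \to PB_{2g+b} \to \mathrm{Sp}_{2g}(\mathbb{Z}/p) \to 1 \]
(or its analogue with $(\mathrm{Sp}_{2g+2}(\mathbb{Z}/p))_{y_{g+1}}$ when $b=2$); this identifies $B_{2g+b}[2p]$ with the kernel of the quotient map from $PB_{2g+b}$ onto the target.

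Next, I would apply Proposition \ref{SYPRE}, which says the target admits the presentation $\langle a_{i,j}\mid PR1,\dots,PR10\rangle$ using exactly the standard pure-braid generators. By the fundamental theorem on group presentations, the kernel of the surjection $PB_{2g+b}\twoheadrightarrow H_{2g+b}$ is the normal closure (in $PB_{2g+b}$) of the relators of $H_{2g+b}$ that are not already relators of $PB_{2g+b}$. Since $PR4$--$PR7$ are precisely the defining relations $P1$--$P4$ of the pure braid group recalled in Section 2, they contribute trivially. The remaining six relations $PR1,PR2,PR3,PR8,PR9,PR10$ therefore supply six normally generating families.

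Finally, I would rewrite each of these six relations as an element of $PB_{2g+b}$ that becomes trivial in the quotient, by moving all terms to one side. Relation $PR2$ yields $a_{i,j}^p$; relation $PR3$ yields $(a_{1,2}a_{1,3}a_{2,3})^2$; relations $PR9$ and $PR10$ yield $a_{1,2}a_{1,3}a_{2,3}C^{-1}$ and $a_{1,2}a_{1,3}a_{1,4}a_{2,3}a_{2,4}a_{3,4}\,Ba_{1,4}^{-1}B^{-1}$ respectively, with $B$ and $C$ as defined in $PR9,PR10$; relation $PR1$ yields the commutator-type word $a_{i,i+1}^{k}a_{i+1,i+2}^{k}a_{i,i+1}^{k}a_{i+1,i+2}^{-k}a_{i,i+1}^{-k}a_{i+1,i+2}^{-k}$; and relation $PR8$ yields $a_{j-1,j}^{k+1}a_{j-2,j-1}^{k+1}\cdots a_{i,i+1}a_{i+1,i+2}^{k}\cdots a_{j-1,j}^{k}\,a_{i,j}^{-1}$. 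These are exactly the six listed families, so they normally generate $B_{2g+b}[2p]$.

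There is no serious obstacle here beyond correctly matching relations: the only step requiring care is verifying that after the Tietze-transformation argument of Proposition \ref{SYPRE}, the relations $PR4$--$PR7$ really do coincide with $P1$--$P4$ in $PB_{2g+b}$ (so that they contribute nothing to the kernel), and that the six remaining relations, rewritten as words evaluating to $1$ in the quotient, are literally the six expressions stated in the corollary.
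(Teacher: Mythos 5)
Your argument is correct and is exactly the reasoning the paper intends: the corollary is stated as an immediate application of Proposition \ref{SYPRE} (combined with Theorem \ref{PBQ1} for $m=2p$), with no written proof, and your identification of the kernel as the normal closure of the six relators $PR1,PR2,PR3,PR8,PR9,PR10$ not already holding in $PB_{2g+b}$ is the intended justification. The six rewritten relators match the six listed families exactly.
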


Actually we can use Proposition \ref{SYPRE} to find normal generators for any $B_n[m]$, where $m$ is either $2 p_1 ... p_k$ or $4 p_1 ... p_k$ and $p_i\geq3$ are prime numbers.

\section{Symmetric quotients of congruence subgroups}

In this section we explore factor groups of congruence subgroups of braid groups. From Section 3 we know that $B_n[2] \cong PB_n$ and $B_n / B_n[2] \cong S_n$. In the next theorem we generalize the latter isomorphism.

\begin{theorem}
The quotient $B_n[p] / B_n[2p]$ is isomorphic to $S_n$.
\label{symquo}
\end{theorem}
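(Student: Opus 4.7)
The plan is to identify $B_n[2p]$ with $PB_n \cap B_n[p]$ and then invoke the second isomorphism theorem. By Proposition \ref{newman} (Newman--Smart), the map $\mathrm{Sp}(\mathbb{Z}/2p) \to \mathrm{Sp}(\mathbb{Z}/2)\oplus \mathrm{Sp}(\mathbb{Z}/p)$ is an isomorphism (and similarly for the stabilizer of $y_{g+1}$ when $n$ is even), so an element of $B_n$ lies in $B_n[2p]$ if and only if it lies in both $B_n[2]$ and $B_n[p]$. Combined with Arnol'd's identification $B_n[2]=PB_n$, this gives
\[
B_n[2p] = B_n[2] \cap B_n[p] = PB_n \cap B_n[p].
\]

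With this in hand, the second isomorphism theorem applied inside $B_n$ yields
\[
B_n[p]/B_n[2p] \;=\; B_n[p]/(PB_n \cap B_n[p]) \;\cong\; PB_n\cdot B_n[p]/PB_n \;\leq\; B_n/PB_n \;=\; S_n.
\]
So it remains to show that $PB_n \cdot B_n[p] = B_n$, i.e.\ that the composition $B_n[p] \hookrightarrow B_n \twoheadrightarrow B_n/PB_n = S_n$ is surjective.

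For this I would use the elements $\sigma_i^p$. On one hand, $\sigma_i \mapsto s_i=(i,i+1)$ under $B_n \to S_n$, and since $p$ is odd we have $\sigma_i^p \mapsto s_i^p = s_i$. On the other hand, as observed in the discussion of relation $R3$ in Section 4, $\rho_p(\sigma_i^p)=T_{[c_i]}^p$ is the identity in the target symplectic group mod $p$, so $\sigma_i^p \in B_n[p]$. Since the transpositions $s_1,\dots,s_{n-1}$ generate $S_n$, the image of $B_n[p]$ in $S_n$ is all of $S_n$, proving surjectivity.

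The only conceptual step is the identification $B_n[2p]=PB_n \cap B_n[p]$, which rests on the Chinese-remainder-style decomposition of Proposition \ref{newman}; everything else is bookkeeping through the second isomorphism theorem together with the elementary fact that $\sigma_i^p$ is simultaneously a lift of the transposition $s_i$ and a congruence element at level $p$.
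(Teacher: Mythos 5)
Your proposal is correct and is essentially the paper's own argument: the paper likewise proves $B_n[2p]=B_n[2]\cap B_n[p]$ via the Newman--Smart decomposition (its Lemma \ref{evenint}) and then restricts the quotient map $\tau:B_n\to S_n$ to $B_n[p]$, using $\tau(\sigma_i^p)=s_i$ for surjectivity and identifying the kernel with $B_n[2]\cap B_n[p]=B_n[2p]$. Your packaging of the last step through the second isomorphism theorem is only a cosmetic difference.
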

Before we proceed to the proof of Theorem \ref{symquo}, we will prove the following lemma.

\begin{lemma}
The groups $B_n[2p]$ and $B_n[2] \cap B_n[p]$ are isomorphic.
\label{evenint}
\end{lemma}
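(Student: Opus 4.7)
The plan is to prove the stronger statement that $B_n[2p]$ and $B_n[2]\cap B_n[p]$ are equal as subgroups of $B_n$; the isomorphism claimed in the lemma is then immediate. The argument is essentially the Chinese Remainder Theorem applied to the symplectic representation.

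First I would observe that $B_n[2p]\subseteq B_n[2]\cap B_n[p]$. Indeed, since both $2$ and $p$ divide $2p$, the reductions $\mathbb{Z}/2p\to\mathbb{Z}/2$ and $\mathbb{Z}/2p\to\mathbb{Z}/p$ induce surjections $\mathrm{Sp}_{2g}(\mathbb{Z}/2p)\to\mathrm{Sp}_{2g}(\mathbb{Z}/2)$ and $\mathrm{Sp}_{2g}(\mathbb{Z}/2p)\to\mathrm{Sp}_{2g}(\mathbb{Z}/p)$ (and similarly for the $y_{g+1}$-stabilizers when $b=2$) through which $\rho_{2p}$ factors; hence any element killed by $\rho_{2p}$ is killed by both $\rho_2$ and $\rho_p$.

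For the reverse inclusion I would invoke Proposition \ref{newman} with $m=2p$: since $\gcd(2,p)=1$, we have a direct sum decomposition
\[ \mathrm{Sp}_{2g}(\mathbb{Z}/2p)=\mathrm{Sp}_{2g}(\mathbb{Z}/2)\oplus\mathrm{Sp}_{2g}(\mathbb{Z}/p), \]
and an analogous decomposition for $(\mathrm{Sp}_{2g+2}(\mathbb{Z}/2p))_{y_{g+1}}$ after intersecting with the $y_{g+1}$-stabilizer in each factor. Under this identification the map $\rho_{2p}$ is the product $(\rho_2,\rho_p)$, so its kernel is $\ker(\rho_2)\cap\ker(\rho_p)=B_n[2]\cap B_n[p]$. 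Equivalently, at the level of integer matrices, if $\beta\in B_n[2]\cap B_n[p]$ then $\rho(\beta)-I$ is divisible by both $2$ and $p$, hence by $2p$ since $\gcd(2,p)=1$, so $\beta\in B_n[2p]$.

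I do not expect a genuine obstacle here: once Proposition \ref{newman} is available, the lemma is a direct CRT statement. The only point requiring a little care is the $b=2$ case, where one must check that the decomposition of the symplectic group restricts properly to the $y_{g+1}$-stabilizer; but this is automatic because the condition $M(y_{g+1})=y_{g+1}$ is preserved by each of the mod-$2$ and mod-$p$ projections.
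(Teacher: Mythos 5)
Your proposal is correct and follows essentially the same route as the paper: the easy inclusion $B_n[2p]\subseteq B_n[2]\cap B_n[p]$ is noted directly, and the reverse inclusion is obtained from the Newman--Smart decomposition $\mathrm{Sp}_{2g}(\mathbb{Z}/2p)\cong\mathrm{Sp}_{2g}(\mathbb{Z}/2)\oplus\mathrm{Sp}_{2g}(\mathbb{Z}/p)$ of Proposition \ref{newman}, i.e.\ the Chinese Remainder Theorem. Your write-up is somewhat more explicit than the paper's (which compresses the CRT step into one sentence), and your remark about the $y_{g+1}$-stabilizer in the $b=2$ case is a reasonable extra check, but there is no substantive difference in method.
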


\begin{proof}
It is obvious that $B_n[2p] < B_n[2] \cap B_n[p]$. By Proposition \ref{newman} we have the decomposition $\mathrm{Sp}_{2g}(\mathbb{Z}/2p) = \mathrm{Sp}_{2g}(\mathbb{Z}/2) \oplus \mathrm{Sp}_{2g}(\mathbb{Z}/p)$. By the homomorphism $\rho: B_n \rightarrow \mathrm{Sp}_{2g}(\mathbb{Z}/2p)$ we deduce that $\rho(B_n[2] \cap B_n[p])$ is trivial. Hence $B_n[2] \cap B_n[p] < B_n[2p]$.
\end{proof}

Now we can prove the main theorem of the section.

\begin{proof}[Proof of Theorem 6.1]
Denote by $s_i$ the transposition $i,i+1$, that is, the generators of $S_n$. We have the following presentation.
\[ S_n = \left< s_1,...,s_{n-1} \: | \: s^2_i=1, s_i s_{i+1} s_i=s_{i+1}s_is_{i+1}, s_i s_j = s_j s_i \: \mathrm{when} \: |i-j|>1  \right>. \]

Consider the natural epimorphism $\tau:B_n \rightarrow S_n$ defined by $\tau(\sigma_i)=s_i$. Fix a prime number $p>2$; then the restriction $\tau:B_n[p] \rightarrow S_n$ is a surjective homomorphism as well. Indeed, we have that $\tau(\sigma^p_i)=s^p_i = s_i$, and for any other generator $g \in B_n[p]$ we have $\tau(g)=1$. Finally, $\mathrm{ker}(\tau) = B_n[2] \cap B_n[p] = B_n[2p]$ by Lemma \ref{evenint}.
\end{proof}

\bibliographystyle{plain}
\bibliography{congruence_subgroups}
\bigskip

Charalampos Stylianakis, department of Mathematics \& Statistics, University of Glasgow, Glasgow, G12 8QW, UK.\\
\textit{E-mail address:} \texttt{c.stylianakis.1@research.gla.ac.uk}


\end{document}